\theoremstyle{plain}
\newtheorem{thm}{\protect\theoremname}
  \theoremstyle{plain}
  \newtheorem{prop}[thm]{\protect\propositionname}
  \theoremstyle{remark}
  \newtheorem{rem}[thm]{\protect\remarkname}
  \theoremstyle{plain}
  \newtheorem{lem}[thm]{\protect\lemmaname}
  \providecommand{\lemmaname}{Lemma}
  \providecommand{\propositionname}{Proposition}
  \providecommand{\remarkname}{Remark}
\providecommand{\theoremname}{Theorem}
\begin{document}

\title{The number of cycles in random permutations without long cycles is
asymptotically Gaussian}

\author{Volker Betz}
\address[Volker Betz]{Technische Universit\"at Darmstadt}
\email{betz@mathematik.tu-darmstadt.de}
\author{Helge Sch\"afer}
\address[Helge Sch\"afer]{Technische Universit\"at Darmstadt}
\email{hschaefer@mathematik.tu-darmstadt.de}

\thanks{H.S. is supported by a PhD scholarship from Deutsche Telekom Stiftung.}
\thanks{The authors gratefully acknowledge support by the DFG project BE 5267/1-1.}
\thanks{The authors wish to thank Dirk Zeindler for helpful discussions.}

\begin{abstract}
For uniform random permutations conditioned to have no long cycles, 
we prove that the total number of cycles satisfies a central limit theorem. 
Under additional assumptions on the asymptotic behavior of the set of 
allowed cycle lengths, we derive asymptotic expansions for the corresponding 
expected value and variance. 
\end{abstract}

\keywords{random permutation; total number of cycles; cycle weights; central limit theorem}
\subjclass[2010]{60C05; 60F05}

\maketitle

\section{Introduction}

It is a classical result about random permutations proved by Goncharov
(\cite{goncharov1944some,goncharov1944field})
that the number of cycles of a uniform random permutation is 
asymptotically Gaussian after suitable normalization. More precisely, for a permutation $\sigma \in \mathcal S_n$, write 
$c_j(\sigma)$ for the number of cycles of length $j$ in $\sigma$ 
and $C(\sigma) = \sum_{j=1}^\infty c_j(\sigma)$ for the total number of cycles. Let $\mathbb P_n$ denote the uniform distribution on 
$\mathcal S_n$, then for all $z \in \mathbb R$, 
\begin{equation}
\lim_{n \to \infty} \mathbb P_n (C - \log n \leq z \sqrt{\log n}) = \Phi(z),
\label{clt classical}
\end{equation}
where $\Phi$ is the distribution function of the standard normal distribution. 

A particularly instructive proof of this result was given
in \cite{shepp1966ordered}, where it is observed that the 
joint distribution of the $c_j$, $j \leq n$, under $\mathbb P_n$ is 
equal to the distribution of independent random variables 
$\tilde c_j$, $j < \infty$, conditional on $\sum j \tilde c_j = n$,
where $\tilde c_j$ is Poisson distributed with parameter $z^j/j$ for 
arbitrary $z \in (0,1)$. In the framework of statistical mechanics,
$z$ can thus be viewed as the chemical potential and the 
joint distribution of the $\tilde c_j$ as the grand 
canonical ensemble corresponding to the measures $\mathbb P_n$. 
Noting that the limit $n \to \infty$ corresponds to $z \to 1$, 
Equation \eqref{clt classical} is then easy to guess 
and not very hard to prove. Indeed, much more is true:
Delaurentis and Pittel \cite{DP85} 
show that for $C_{n,t}(\sigma) = \sum_{j=1}^{n^t} 
c_j(\sigma)$, the joint distribution of 
$((\log n)^{-1/2} (C_{n,t} - t \log n))_{0 \leq t \leq 1}$ converges to 
standard Brownian motion on the time interval $[0,1]$. Goncharov \cite{goncharov1944some} 
and Kolchin \cite{Kol71} 
show that the finite dimensional distributions of the sequence 
$(c_j)$ converge to independent Poisson variables $C_j$ with parameters $1/j$. Arratia and Tavar\'e \cite{AT92} significantly 
improve this result: they show that the variation distance between the 
joint distribution of $(c_j)_{j \leq b(n)}$ and the independent Poisson variables 
$(C_j)_{j \leq b(n)}$ converges to zero if and only if $\lim_{n \to \infty} b(n)/n = 0$;
they even show the same result when considering any $b(n)$ indices instead of the 
$b(n)$ first ones. So,  the uniform measure on 
permutations is very well understood. 

A natural next step is to study conditional measures, i.e.\ 
the uniform distribution on certain subsets of $\mathcal S_n$. 
We will be interested 
in permutations where only certain cycle lengths are allowed. Here, 
Theorem 3 in  \cite{AT92} gives some interesting, partly more general answers: if we 
consider the uniform distribution on permutations such that for all $j$ from a subset 
$J \subset \{1, \ldots, b(n)\}$, the cycle counts $c_j$ 
have prescribed values $\bar c_j$, the variation distance between the 
remaining cycle counts and the corresponding independent Poisson variables $C_j$ still converges to zero
as $n \to \infty$. In particular, we can condition $\bar c_j=0$ for as many $j \leq b(n)$
as we like and still get the Poisson convergence. However, the results of 
\cite{AT92} do not allow us to condition on $\bar c_j = 0$ for $j > b(n)$, i.e.\ we cannot
force large cycles to disappear. Conditioning on the absence of {\em some} large 
cycles is the content of the theory of $A$-permutations. Here, a set $A \subset \mathbb N$ 
is fixed, and the sequence of uniform measures on the sets of 
permutations $T_n(A)$ on $\{1, \ldots n\}$ and with cycle lengths in $A$ is studied. 
Under the condition that for each $n$ the number of permutations in 
$T_n(A)$ is large enough (precisely, of order $(n-1)! n^{\alpha}$ for some $\alpha \in (0,1]$), 
Yakimiv shows the Poisson convergence of finite dimensional distributions \cite{Yak07} 
and the central limit theorem for the total number of cycles \cite{Yak08}. The latter paper 
also contains a long list of references to the vast literature on $A$-permutations. 

The situation is different when the 
set $A$ depends on $n$ and is such that long cycles are forbidden. 
For simplicity think of conditioning on the absence of cycles longer than 
$n^\alpha$ where $0 < \alpha < 1$. 
To see why this drastically changes the situation, recall that for 
large $n$ under the uniform measure on permutations, the fraction of indices in cycles 
of order $o(n)$ converges to zero. So, even though macroscopic (order $n$) cycles do not 
contribute significantly to the cycle count, they absorb the majority of indices. If we 
exclude them, the total number of cycles must grow much faster than logarithmically. 
Indeed, we will show below that when excluding cycles longer than $n^\alpha$, the average
number of cycles grows like $n^{1-\alpha}$. 
Since all of the above mentioned methods in essence rely on the fact 
that the total cycle number is well approximated by a sum of independent 
Poisson random variables $\tilde c_j$ with parameters $1/j$, they all  
predict a logarithmic growth of the cycle count. So none of them can possibly apply 
to the situation of restricted cycle lengths. It is thus interesting that the central limit 
theorem for the total number of cycles is robust enough to carry over to the case
of permutations without long cycles. The proof of this statement (for rather 
general restrictions on cycle lengths) is the main result of the 
present paper. 

A somewhat complementary result is obtained by Nikeghbali, Storm, and Zeindler
\cite{NiStZei}. They consider permutations of $n$ elements 
having only cycle lengths in a set $A_n \subset \mathbb N$ 
under the assumption that $\max  (\left\{1,2,...,n\right\} \setminus A_n) = o(n)$, i.e.\ $A_n$ 
contains all the 'long' cycles. They obtain a functional central limit theorem, again 
with logarithmic average cycle number. Indeed, they can treat slightly more general 
types of permutations by including cycle weights: the probability of a permutation $\sigma$ is modified  by a Boltzmann factor 
$\exp(-\sum \alpha_j c_j(\sigma))$, where the $\alpha_j$ correspond to 
the energies of cycles of length $j$. The case of constant 
$\alpha_j = \alpha$ corresponds to the Ewens distribution with 
parameter $\exp(\alpha)$, see \cite{Han90} for the first proof of a  functional central limit theorem in that case. 

A different use of cycle weights actually 
leads to the two only other situations that we are aware of 
where the typical number of cycles is {\em not} logarithmic and satisfies a 
central limit theorem. Firstly, Maples, Nikeghbali, and Zeindler \cite{maples2012number} consider
the generalized Ewens measure with cycle weights of the form $\prod j^{\alpha c_{j}(\sigma)}$ with $\alpha > 0$.
Secondly, Bogachev and Zeindler \cite{BZ15} 
replace the 
sequence $\alpha_j$ by a doubly indexed sequence 
$\alpha_{j,n}$. Under suitable assumptions on the asymptotics of the 
$\alpha_{j,n}$, they show 
(among many other things) that the number of cycles   
is of order $n$ and satisfies a central limit theorem. In particular, 
this implies that,  contrary to the situation in the present paper, 
there is a positive fraction of points in finite cycles.  
The latter property motivates the name 'surrogate spatial random 
permutations' in the title of \cite{BZ15}: while for random 
permutations it is in general rare to see a fraction of indices in 
finite cycles due to very strong entropic effects, this phenomenon 
becomes the norm when we add a spatial component to the model. 
These spatial random permutations are still relatively little 
understood, but appear to have many intriguing properties such as 
a phase transition from a regime without long cycles
to one with a mixture of long and short cycles 
(proved for a special case in \cite{BU11}) and a rich geometry of 
the set of points in long cycles (see \cite{GLU,Be14} for some 
simulations). We refer the reader to \cite{Be14} and the references 
therein for more information about these models.

\section{Results}

Let $\alpha: \mathbb N \to \mathbb N$, and consider 
\[
S_{n,\alpha}:=\left\{ \sigma\in S_{n}:\text{the cycles of }\sigma\text{ are not longer than }\alpha\left(n\right)\right\}, 
\]
the set of permutations with $n$ elements and all cycles shorter than $\alpha(n)$.
Let $\mathbb{P}_{n,\alpha}$ denote the uniform distribution on $S_{n,\alpha}$, and 
write 
\[
C(\sigma) = \sum_{j=1}^n c_j(\sigma)
\] 
for the sum of cycle counts $c_j(\sigma)$.
For $\delta > 0$ and all $w \in (1-\delta, 1+\delta)$, let $x_{n,\alpha}(w)$ be the unique positive solution of the equation 
\begin{equation}
\sum_{j=1}^{\alpha\left(n\right)}x_{n,\alpha}^{j}\left(w\right)=\frac{n}{w}.\label{eq:saddle}
\end{equation}
Equivalently, $x_{n,\alpha}$ is the unique root of the polynomial 
\[
p_{n,\alpha,w}(x) = w x^{\alpha(n)+1} - (w + n) x + n
\]
that is contained in the interval $(1,\infty)$. 
Let us further write 
\begin{equation}
m_{n,\alpha} := \sum_{j=1}^{\alpha(n)} \frac{x_{n,\alpha}^j(1)}{j}, 
\quad v_{n,\alpha} := m_{n,\alpha} + n \frac{x'_{n,\alpha}(1)}{x_{n,\alpha}(1)},
\label{mean and variance}
\end{equation}
where $x'_{n,\alpha}$ denotes the derivative of $x_{n,\alpha}$. 
Our main result is 

\begin{thm} \label{main thm}
	Assume that
	\begin{equation*}
	\liminf_{n \to \infty} \alpha(n) \geq 4
	\end{equation*}
	and  
	\begin{equation} \label{growth condition}
	\limsup_{n \to \infty} \frac{\alpha(n)}{n} \log(n) (\log(\log(n)))^2 
	< \frac{1}{12 \pi^2 \mathrm e}.
	\end{equation}
	Then for all $z \in \mathbb R$, 
	\[
	\lim_{n \to \infty} \mathbb P_{n,\alpha} \Big( 
	C - m_\alpha(n) \geq z \sqrt{v_{n,\alpha}} \Big) = \Phi(z).
	\]
	Here, $\Phi$ is the distribution function of the standard normal distribution. 
\end{thm}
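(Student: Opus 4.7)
The strategy is a saddle point analysis of the exponential generating function. By Cauchy's formula, for any positive $r$,
\begin{equation*}
\mathbb{E}_{n,\alpha}\bigl[w^C\bigr] = \frac{I_{n,w}}{I_{n,1}}, \qquad I_{n,w} := \frac{1}{2\pi\mathrm{i}}\oint_{|z|=r} \exp\!\left(w \sum_{j=1}^{\alpha(n)} \frac{z^j}{j}\right)\frac{\mathrm{d}z}{z^{n+1}}.
\end{equation*}
Choosing $r = x_{n,\alpha}(w)$, the unique root of \eqref{eq:saddle} in $(1,\infty)$, turns $z = x_{n,\alpha}(w)$ into a saddle point of the integrand on this circle. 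To establish the CLT via L\'evy's continuity theorem I would set $w_t := \exp(\mathrm{i}t/\sqrt{v_{n,\alpha}})$ and show, for each fixed $t \in \mathbb{R}$, that the centered characteristic function $\phi_n(t) := \mathrm{e}^{-\mathrm{i}t m_{n,\alpha}/\sqrt{v_{n,\alpha}}}\,\mathbb{E}_{n,\alpha}[w_t^C]$ converges to $\mathrm{e}^{-t^2/2}$.

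The algebraic backbone is the expansion of the phase $F(w) := w \sum_{j=1}^{\alpha(n)} x_{n,\alpha}(w)^j/j - n \log x_{n,\alpha}(w)$ around $w = 1$. Because the $x$-derivative of this expression vanishes at $x = x_{n,\alpha}(w)$ by \eqref{eq:saddle}, implicit differentiation yields $F'(1) = m_{n,\alpha}$ and $F'(1) + F''(1) = v_{n,\alpha}$. These are precisely the combinations that emerge when one expands $\exp(F(w_t) - F(1))$ up to order $1/v_{n,\alpha}$ using $w_t - 1 = \mathrm{i}t/\sqrt{v_{n,\alpha}} - t^2/(2 v_{n,\alpha}) + O(v_{n,\alpha}^{-3/2})$, so the linear drift is exactly cancelled by the centering factor $\mathrm{e}^{-\mathrm{i}tm_{n,\alpha}/\sqrt{v_{n,\alpha}}}$ and the quadratic part produces exactly $-t^2/2$.

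Next I would parametrise $z = x_{n,\alpha}(w)\mathrm{e}^{\mathrm{i}\theta}$ with $\theta \in [-\pi,\pi]$, fix a cut-off $\theta_0(n) \to 0$ and split the contour. On the Gaussian window $|\theta| \leq \theta_0(n)$, a second-order $\theta$-expansion of the exponent yields a standard Gaussian integral of width $B_{n,\alpha}(w)^{-1/2}$, where $B_{n,\alpha}(w)$ is the transverse curvature at the saddle; the ratio $\sqrt{B_{n,\alpha}(1)/B_{n,\alpha}(w_t)}$ tends to $1$ because the perturbation $w_t - 1$ is of order $1/\sqrt{v_{n,\alpha}} \to 0$, while higher-order terms in $\theta$ are controlled by the smallness of $\theta_0(n)$. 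Combined with the phase expansion above, the window contribution to $\phi_n(t)$ equals $\mathrm{e}^{-t^2/2}(1+o(1))$.

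The technical heart of the proof, and the main obstacle, is the tail bound for $|\theta| > \theta_0(n)$: one must show that $|\exp(w\sum_{j=1}^{\alpha(n)} (r\mathrm{e}^{\mathrm{i}\theta})^j/j)|$ is sufficiently smaller than its value at $\theta = 0$, uniformly outside the window. The difficulty lies near rational angles $\theta \approx 2\pi p/q$ with $q$ small relative to $\alpha(n)$, where a positive fraction of the summands $\mathrm{e}^{\mathrm{i}j\theta}$ cluster close to $1$ and the oscillatory cancellation is weak. Controlling these near-resonances via Dirichlet-type estimates for the trigonometric sums $\sum_{j=1}^{\alpha(n)} r^j \mathrm{e}^{\mathrm{i}j\theta}/j$ and optimising the cut-off $\theta_0(n)$ produces a tail-to-peak ratio whose critical threshold is governed precisely by $\alpha(n)n^{-1}\log n (\log\log n)^2$; the explicit constant $1/(12\pi^2\mathrm{e})$ in hypothesis \eqref{growth condition} is exactly the value that forces this quantity into the subcritical regime where the tail is negligible next to the saddle contribution. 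Assembling the window and tail estimates in both $I_{n,w_t}$ and $I_{n,1}$ then yields $\phi_n(t) = \mathrm{e}^{-t^2/2}(1 + o(1))$ uniformly on compact sets in $t$, completing the proof.
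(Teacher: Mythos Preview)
Your saddle-point framework and the identities $F'(1)=m_{n,\alpha}$, $F'(1)+F''(1)=v_{n,\alpha}$ are correct and match the paper's setup. But the proposal has a genuine gap in a place you do not flag.

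You claim that expanding $\exp(F(w_t)-F(1))$ to order $1/v_{n,\alpha}$ produces exactly $-t^2/2$ and the rest is negligible. The remainder of that Taylor expansion is governed by $F'''(w)(w_t-1)^3$, so you need $F'''/v_{n,\alpha}^{3/2}\to 0$ (this is precisely the Sachkov-type condition the paper uses). Now $F'''(w)=O(n/\alpha(n))$, while $v_{n,\alpha}=m_{n,\alpha}+n\,x_{n,\alpha}'(1)/x_{n,\alpha}(1)$ is a \emph{difference} of two quantities that are each $\sim n/\alpha(n)$: one checks easily that $m_{n,\alpha}\sim n/\alpha(n)$ and $n\,x'/x\sim -n/\alpha(n)$. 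So the leading orders cancel, and nothing you have written rules out $v_{n,\alpha}=o((n/\alpha(n))^{2/3})$, which would make the cubic remainder blow up. The paper spends most of its effort exactly here (Propositions~\ref{prop:hStrich} and~\ref{prop:hStrichStrich}): it expands $h'_{n,\alpha}(1)$ via the exponential integral to several orders and matches each order against the geometric-series expansion of $h''_{n,\alpha}(1)$, finally extracting a surviving positive term of size $\gtrsim n/(\alpha(n)^3(x-1)\log x)$, which is just large enough. Your sketch contains no mechanism for this.

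A secondary point: you locate the constant $1/(12\pi^2\mathrm e)$ correctly in the tail/minor-arc estimate of the saddle-point method, but the paper imports that estimate wholesale from Manstavi\v{c}ius--Petuchovas (Proposition~\ref{prop:asymp}) for \emph{real} $w\in(1-\delta,1+\delta)$ and then applies Sachkov's real-$w$ criterion. Your route via $w_t=\exp(\mathrm i t/\sqrt{v_{n,\alpha}})$ requires the saddle asymptotics for complex $w$ near $1$, which is not quite what is in the literature and would need separate justification. In short, your emphasis is inverted: the tail bound is available off the shelf, while the cancellation in $v_{n,\alpha}$ is the real obstacle and is missing from your argument.
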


Note that there is an extra term in the variance in equation \eqref{mean and variance}. 
So in contrast to the classical situation, asymptotic mean and variance need not be of 
the same order. Under some additional assumptions on the function $\alpha$, 
we can derive asymptotic expressions for $m_{n,\alpha}$ and $v_{n,\alpha}$ and see that 
they are indeed of different order. For this, recall first the 
concept of asymptotic expansion.
Let $(\phi_{k})_{k \in \mathbb N}$ be a sequence of functions
such that for all $k$
\[
\phi_{k+1}\left(z\right)=o\left(\phi_{k}\left(z\right)\right)
\]
as $z\rightarrow\infty$. We say that  $\sum_{k=0}^{\infty}a_{k}\phi_{k}\left(z\right)$ 
is an asymptotic expansion for the function $f\left(z\right)$ as $z\rightarrow\infty$, 
and write 
\[
f\left(z\right)\sim\sum_{k=0}^{\infty}a_{k}\phi_{k}\left(z\right),
\]
if for all $N \in \mathbb N$
\[
f\left(z\right)=\sum_{k=0}^{N}a_{k}\phi_{k}\left(z\right)+o\left(\phi_{N}\left(z\right)\right)
\]
as $z\rightarrow\infty$ (cf., e.g., \cite{murray1992asymptotic}).

Let us write $\xi=\xi\left(u\right)$ for the non-zero
solution of 
\begin{equation}
\exp\left(\xi\right)=1+u\xi
\label{eq:DefXi}
\end{equation}
with $\xi\left(1\right)=0$. Note that $\log\left(u\right)<\xi\left(u\right)\leq2\log\left(u\right)$
for $u>1$.

\begin{thm} \label{quantitative}
	Assume that there are $\alpha_0, \alpha_1 \in (0,1)$ with 
	\[	
	\liminf_{n\to\infty} n^{-\alpha_0} \alpha(n) = \infty \quad \text{and} \quad 
	\limsup_{n\to\infty} n^{-\alpha_1}
	\alpha(n) = 0.
	\]
	Then the quantities defined in \eqref{mean and variance} have the asymptotic expansions
	\[
	m_{n,\alpha} \sim\frac{n}{\alpha\left(n\right)}\sum_{k=0}^{\infty}	
	\frac{k!}{\left(\xi\left(\frac{n}{\alpha\left(n\right)}\right)\right)^{k}}
	\]
	and
	\[
	v_{n,\alpha}\sim\frac{n}
	{\alpha\left(n\right)}\sum_{k=2}^{\infty}\frac{k!-1}
	{\left(\xi\left(\frac{n}{\alpha\left(n
	\right)}\right)\right)^{k}}.
	\]
	\end{thm}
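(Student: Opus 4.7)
Write $x := x_{n,\alpha}(1)$, $y := \log x$, $\zeta := \alpha(n) y$, and $u := n/\alpha(n)$. Summing the geometric series in the saddle equation \eqref{eq:saddle} at $w=1$ yields
\[
e^{\zeta} \;=\; 1 + n\bigl(1 - e^{-\zeta/\alpha(n)}\bigr) \;=\; 1 + u\zeta + O\!\left(\frac{u\zeta^{2}}{\alpha(n)}\right),
\]
so comparing with $e^{\xi} = 1 + u\xi$ and iterating gives $\zeta = \xi(u) + O((\xi(u))^{2}/\alpha(n))$, and in fact a complete expansion of $\zeta$ in negative powers of $\alpha(n)$. Since $\alpha(n) \geq n^{\alpha_{0}}$ while $\xi(u) = O(\log n)$, the quantity $\xi/\alpha(n)$ decays faster than any negative power of $\xi$, so every expansion in powers of $\zeta^{-1}$ translates term-by-term into the same expansion in $\xi^{-1}$.

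To expand $m_{n,\alpha}$, reindex $j = \alpha(n) - k$ and factor out the dominant factor:
\[
m_{n,\alpha} \;=\; \frac{e^{\zeta}}{\alpha(n)} \sum_{k=0}^{\alpha(n)-1} \frac{e^{-ky}}{1 - k/\alpha(n)}.
\]
Expand the geometric factor $(1-k/\alpha(n))^{-1} = \sum_{l=0}^{L} (k/\alpha(n))^{l} + \text{rem.}$ up to arbitrary $L$ and interchange the order of summation; the problem reduces to evaluating the polylogarithmic sums $\sum_{k=0}^{\infty} k^{l} e^{-ky} = l!/y^{l+1} + O(1/y^{l})$, with the tail $\sum_{k \geq \alpha(n)} k^{l} e^{-ky}$ bounded by $\alpha(n)^{l+1} e^{-\zeta}/\zeta$ and hence negligible since $\zeta \to \infty$. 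Using $\alpha(n) y = \zeta$ and $e^{\zeta}/\zeta = u + 1/\zeta + (\text{lower})$ (from the saddle), the result simplifies to $m_{n,\alpha} \sim u \sum_{l \geq 0} l!/\zeta^{l}$, and replacing $\zeta$ by $\xi$ yields the claimed expansion.

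For $v_{n,\alpha}$, differentiating $\sum_{j} x(w)^{j} = n/w$ at $w = 1$ gives $n\, x'(1)/x(1) = -n^{2}/\sum_{j=1}^{\alpha(n)} j x^{j}$, and a direct manipulation using the saddle identity $x^{\alpha(n)+1} = 1 + (n+1)(x-1)$ produces the exact closed form
\[
\sum_{j=1}^{\alpha(n)} j x^{j} \;=\; (n+1)\alpha(n) + \frac{\alpha(n) - n}{x - 1}.
\]
Inserting the Bernoulli expansion $(e^{y} - 1)^{-1} = 1/y - 1/2 + y/12 - \dots$ and $y = \zeta/\alpha(n)$ gives an expansion of $\sum_{j} j x^{j}$ whose leading behavior is $n\alpha(n)(\zeta-1)/\zeta$, so that $n^{2}/\sum_{j} j x^{j} \sim u\zeta/(\zeta - 1) = u \sum_{k \geq 0} \zeta^{-k}$. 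Combining with the expansion for $m_{n,\alpha}$ produces
\[
v_{n,\alpha} \;\sim\; u \sum_{k \geq 0} \frac{k! - 1}{\zeta^{k}} \;=\; u \sum_{k \geq 2} \frac{k! - 1}{\zeta^{k}},
\]
since $0! - 1 = 1! - 1 = 0$, and the substitution $\zeta \mapsto \xi$ completes the proof.

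The main technical difficulty lies in uniform error control. At each fixed order $N$, one must verify that the combined errors from truncating the geometric expansion in $k/\alpha(n)$, from the polylogarithm tail, from the Bernoulli truncation, and from the $\zeta \leftrightarrow \xi$ passage are all $o(u/\xi^{N})$. The polynomial growth bounds $n^{\alpha_{0}} \leq \alpha(n) \leq n^{\alpha_{1}}$ with $\alpha_{0}, \alpha_{1} \in (0,1)$ ensure $\xi(u) = O(\log n)$ is dwarfed by every positive power of $\alpha(n)$, so all these errors are indeed negligible at every fixed order.
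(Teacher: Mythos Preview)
Your approach is correct and genuinely different from the paper's. The paper proceeds via the integral representation established in Proposition~\ref{prop:hStrich}: after a substitution, $m_{n,\alpha}$ is written as $T_{\alpha(n)}(\alpha(n)\log x) + I(\alpha(n)\log x) + O(1)$, where $T_K$ is the correction defined in \eqref{eq:defT} and $I(z)=\int_0^z (e^t-1)t^{-1}\,dt$. The term $T$ is controlled by the Manstavi\v{c}ius--Petuchovas bound (Lemma~\ref{lem:Tabschaetzung}), the argument $\alpha(n)\log x$ is replaced by $\xi$ via Lemma~\ref{lem:xiEigensch}, and the expansion of $I$ comes from the classical asymptotics of the exponential integral. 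For $h''$, the paper inverts the geometric-series formula of Proposition~\ref{prop:hStrichStrich} in powers of $1/(\alpha(n)(x-1))$ and again passes to $\xi$. Your route bypasses the integral and the external lemmas entirely: you reindex the sum $\sum x^{j}/j$ from the top, expand $(1-k/\alpha(n))^{-1}$ binomially, and reduce to the small-$y$ polylogarithm asymptotics $\sum_k k^l e^{-ky}\sim l!/y^{l+1}$. For the variance you find the exact closed form $\sum jx^{j} = (n+1)\alpha(n) + (\alpha(n)-n)/(x-1)$ directly from the saddle identity, then use the Bernoulli expansion of $(e^{y}-1)^{-1}$.

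What each buys: the paper's argument packages the analysis into two off-the-shelf estimates (Ei and the $T$-bound), so the bookkeeping is minimal once those lemmas are in hand. Your argument is more self-contained and arguably more transparent---the closed form for $\sum jx^{j}$ is particularly clean---but it requires controlling several independent error sources by hand (the geometric remainder when $k$ is close to $\alpha(n)$, the polylog tail, the Bernoulli truncation, and the $\zeta\!\leftrightarrow\!\xi$ passage). You correctly identify that the hypotheses $n^{\alpha_0}\ll\alpha(n)\ll n^{\alpha_1}$ force $\xi=O(\log n)$ to be dominated by every positive power of $\alpha(n)$, which is exactly what makes all these errors $o(u/\xi^N)$ for every fixed $N$. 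One point you might make more explicit: when bounding the geometric remainder, the factor $(1-k/\alpha(n))^{-1}$ becomes large near $k=\alpha(n)-1$, so one should split off, say, the range $k>\alpha(n)/2$ and observe that the factor $e^{-ky}\le e^{-\zeta/2}$ there makes its total contribution $O(e^{\zeta/2}\log\alpha(n))$, which is still $o(u/\xi^{N})$.
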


This result shows that the average cycle count is close 
to its theoretical minimum, or in other words the average cycle lengths 
are close to maximal. This supports observations made e.g. in 
\cite{BUV} that 'entropy' is extremely strong in random permutations in the sense that
it is very hard to 'force' many short cycles by modifying the uniform measure on 
permutations in natural ways, such as cycle weights or nontrivial conditioning.

\section{Proofs: overview}

We will use the following criterion for a central limit theorem:

\begin{prop}
\label{prop:Sach}\cite{Sachkov1997} Let $A_n$ be a sequence of 
integer-valued random variables and assume that, as $n\rightarrow\infty$,
their probability generating functions $H_{n}$ can be written as 
\[
H_{n}\left(w\right)=\exp\left(h_{n}\left(w\right)\right)\left(1+o\left(1\right)\right),
\]
uniformly in the interval $W:=\left(1-\delta,1+\delta\right)$ for some $\delta > 0$, where $h_n$ has 
the following properties: $h_{n}$ is thrice continuously differentiable in $W$, and 
\begin{equation} \label{sachkov condition}
\lim_{n\to\infty} \sup_{w \in W} \frac{h_{n}^{\prime\prime\prime}\left(w\right)}{\left(h_{n}^{\prime}\left(1\right)+h_{n}^{\prime\prime}\left(1\right)\right)^{\frac{3}{2}}} = 0.
\end{equation}
Then the distribution of the random variable
\[
\eta_{n}:=\frac{A_{n}-h_{n}^{\prime}\left(1\right)}{\left(h_{n}^{\prime}\left(1\right)+h_{n}^{\prime\prime}\left(1\right)\right)^{\frac{1}{2}}}
\]
converges weakly to a standard normal distribution 
as $n\rightarrow\infty$.
\end{prop}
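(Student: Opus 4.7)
The plan is to establish the conclusion via convergence of moment generating functions: I would show that $M_n(t) := \mathbb{E}[e^{t\eta_n}]$ tends to $e^{t^2/2}$ for $t$ in a real neighborhood of the origin, which by Curtiss' theorem implies weak convergence to the standard normal distribution. Throughout I abbreviate $\mu_n := h_n'(1)$ and $\sigma_n^2 := h_n'(1) + h_n''(1)$, and work in the non-degenerate regime $\sigma_n \to \infty$ (otherwise the statement is either vacuous or follows by separate elementary arguments).

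For fixed $t \in \mathbb{R}$ and $n$ large, $w_n := e^{t/\sigma_n}$ lies in $W$, so the defining hypothesis $H_n = e^{h_n}(1 + o(1))$ yields
\[
\log M_n(t) \;=\; -\,\frac{t\mu_n}{\sigma_n} + h_n(w_n) + o(1).
\]
I would then Taylor expand the composition $\phi_n(s) := h_n(e^s)$ about $s = 0$ with Lagrange remainder. The chain rule gives $\phi_n(0) = h_n(1) = o(1)$ (forced by $H_n(1) = 1$), $\phi_n'(0) = \mu_n$, $\phi_n''(0) = \sigma_n^2$, and
\[
\phi_n'''(s) \;=\; e^s h_n'(e^s) + 3 e^{2s} h_n''(e^s) + e^{3s} h_n'''(e^s).
\]
Substituting the expansion of $\phi_n(s_n)$ at $s_n := t/\sigma_n$ back into the previous display, the linear term cancels the $-t\mu_n/\sigma_n$ prefactor, the quadratic term produces exactly $t^2/2$, and the remainder becomes $\phi_n'''(\zeta_n) t^3/(6\sigma_n^3)$ for some $\zeta_n$ between $0$ and $s_n$. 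Thus the entire task reduces to showing $\phi_n'''(\zeta_n)/\sigma_n^3 \to 0$.

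The summand involving $h_n'''$ in $\phi_n'''(\zeta_n)$ is controlled immediately by hypothesis \eqref{sachkov condition}. For the other two, I would use $\zeta_n = O(1/\sigma_n)$ together with Taylor expansions of $h_n'$ and $h_n''$ about $w = 1$, whose Lagrange remainders are again controlled by $\sup_{w \in W} |h_n'''(w)|$ and whose base values are $\mu_n$ and $\sigma_n^2 - \mu_n$. The main obstacle is precisely this bookkeeping: the chain-rule expression for $\phi_n'''$ involves $h_n'$ and $h_n''$ at points displaced from $1$, while the hypothesis bounds only $h_n'''$ uniformly on $W$, so the lower derivatives must be transferred back to $w = 1$ via Taylor with Lagrange remainder. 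Combining the resulting estimates reduces everything to a linear combination of $\mu_n/\sigma_n^3$, $1/\sigma_n$, and $\sup_{w \in W} |h_n'''(w)|/\sigma_n^3$, each vanishing in the limit (the first under the mild growth condition $\mu_n = o(\sigma_n^3)$, which is automatic in the intended applications and is the natural companion of any genuine Gaussian central limit theorem). Once $\phi_n'''(\zeta_n)/\sigma_n^3 \to 0$ is in hand, $\log M_n(t) \to t^2/2$ on a real interval containing $0$, and Curtiss' theorem delivers the conclusion.
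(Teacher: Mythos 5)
The paper itself offers no proof of this proposition---it is imported verbatim from Sachkov's book---so your attempt can only be measured against the standard argument, which is indeed the one you outline: evaluate $H_n$ at $w_n=\mathrm e^{t/\sigma_n}$, Taylor-expand $\phi_n(s)=h_n(\mathrm e^{s})$ to third order, and invoke Curtiss' theorem. The structure is right, but there is one genuine gap, which you have located without closing: the third-order remainder contains the term $\mu_n t^3/\sigma_n^3$ (after transferring $h_n'$ and $h_n''$ back to $w=1$, the surviving contribution of $\phi_n'''(\zeta_n)=\mathrm e^{\zeta_n}h_n'(\mathrm e^{\zeta_n})+3\mathrm e^{2\zeta_n}h_n''(\mathrm e^{\zeta_n})+\mathrm e^{3\zeta_n}h_n'''(\mathrm e^{\zeta_n})$ is $3\sigma_n^2-2\mu_n$ up to errors controlled by $\sup_W|h_n'''|$). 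Hypothesis \eqref{sachkov condition} controls only $h_n'''$ and says nothing about $h_n'(1)$, so $\mu_n=o(\sigma_n^3)$ is an \emph{additional} assumption: with it you prove a weaker proposition than the one stated. This matters for the paper, which when applying the proposition in the proof of Theorem \ref{main thm} verifies only \eqref{sachkov condition}; there $h_{n,\alpha}'(1)\sim n/\alpha(n)$ while $h'+h''$ may be as small as $n/((\alpha(n))^3(x_{n,\alpha}(1)-1)\log(x_{n,\alpha}(1)))$, so the extra condition happens to hold but would need its own verification.

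The gap can in fact be closed from the stated hypotheses, using an ingredient you never exploit: $\Lambda_n(s):=\log H_n(\mathrm e^{s})=\log\mathbb E[\mathrm e^{sA_n}]$ is a genuine cumulant generating function, hence convex on $(\log(1-\delta),\log(1+\delta))$, so its second differences over any fixed scale $\Delta$ are nonnegative; by the uniform $1+o(1)$ approximation these differ from the second differences of $\phi_n$ by $o(1)$. Writing $h_n(w)=h_n(1)+\mu_n(w-1)+\tfrac12(\sigma_n^2-\mu_n)(w-1)^2+O(\sup_W|h_n'''|)$ and computing the second difference of $\phi_n$ centered at a fixed $s_0>0$ (resp.\ $s_0<0$), the coefficient of $\mu_n$ is the fixed constant $8\mathrm e^{s_0}\sinh^2(\Delta/2)\bigl(1-\mathrm e^{s_0}\cosh^2(\Delta/2)\bigr)$, negative (resp.\ positive for suitable $\Delta$), while all other terms are $O(\sigma_n^2)+o(\sigma_n^3)$; nonnegativity then forces $\mu_n\le o(\sigma_n^3)$ (resp.\ $\mu_n\ge -o(\sigma_n^3)$), i.e.\ $|\mu_n|=o(\sigma_n^3)$ whenever $\sigma_n\to\infty$. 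Two smaller points you should make explicit: condition \eqref{sachkov condition} as printed is a one-sided bound on a signed quantity, whereas every step of the argument needs $\sup_W|h_n'''|=o(\sigma_n^3)$; and the hypothesis $h_n'(1)+h_n''(1)\to\infty$, which you wave at as the ``non-degenerate regime,'' is genuinely needed (an integer-valued $A_n$ cannot satisfy a non-degenerate CLT with bounded normalization) and should be stated rather than dismissed.
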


For applying the above result, we need information on the generating function of the 
random cycle count $C$ under $\mathbb P_{n,\alpha}$. As is well known (see e.g.\ \cite{Flajolet2009}), the cardinality of $S_{n,\alpha}$ is given by 
\begin{equation}
\left|S_{n,\alpha}\right|=n!\left[z^{n}\right]\exp\left(\sum_{j=1}^{\alpha\left(n\right)}\frac{z^{j}}{j}\right)=n!\frac{1}{2\pi i}\int_{\gamma}\exp\left(\sum_{j=1}^{\alpha\left(n\right)}\frac{z^{j}}{j}\right)\frac{\mathrm{d}z}{z^{n+1}}\label{eq:cardi}
\end{equation}
where $\gamma$ is any closed curve around the origin in the complex
plane. The probability generating function $H_{n,\alpha}\left(w\right)$
of $C$ under $\mathbb P_{n,\alpha}$ can therefore be written as
\begin{equation}
H_{n,\alpha}\left(w\right)=c_{n,\alpha}\frac{1}{2\pi i}\int_{\gamma}\exp\left(w\sum_{j=1}^{\alpha\left(n\right)}\frac{z^{j}}{j}\right)\frac{\mathrm{d}z}{z^{n+1}}\label{eq:GenFunc}
\end{equation}
where $c_{n,\alpha}$ is a normalization constant (which does not
depend on $w$) (cf. \cite{Flajolet2009} for both equations).

In \cite{Sachkov1997}, Proposition \ref{prop:Sach} is applied
to cases such as Equation (\ref{eq:GenFunc}), and an approximation using
the saddle point method is given. The difference to our situation however is that 
in the cited reference, there are fixed polynomials in the exponential, while in our case 
the upper limit $\alpha(n)$ of the sum will typically grow with $n$. To handle this 
situation, we use a very slight generalization of precise asymptotic estimates 
recently derived by Manstavi\v{c}ius
and Petuchovas \cite{manstavivcius2014local,manstavivcius2015permutations}.

\begin{prop}
\label{prop:asymp}
Under the assumptions of Theorem \ref{main thm},  the following asymptotic relation holds as $n\rightarrow\infty$,
uniformly for $w\in\left(1-\delta,1+\delta\right)$:
\[
H_{n,\alpha}\left(w\right)=c_{n,\alpha}\frac{\exp\left(w\sum_{j=1}^{\alpha\left(n\right)}\frac{x_{n,\alpha}^{j}\left(w\right)}{j}\right)}{x_{n,\alpha}^{n}\left(w\right)\sqrt{2\pi w\sum_{j=1}^{\alpha\left(n\right)}jx_{n,\alpha}^{j}\left(w\right)}}\left(1+O\left(\frac{\alpha\left(n\right)}{n}\right)\right).
\]
Here, $x_{n,\alpha}\left(w\right)$ is the unique positive solution of \eqref{eq:saddle}. 
\end{prop}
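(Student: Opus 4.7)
The plan is to apply the saddle-point method to the contour integral \eqref{eq:GenFunc}. Setting $F_w(z) := w\sum_{j=1}^{\alpha(n)} z^j/j - n\log z$, the integrand is $e^{F_w(z)}/z$, and the equation $F_w'(z) = 0$ is precisely \eqref{eq:saddle}, so the saddle point is $x := x_{n,\alpha}(w)$. I would take $\gamma = \{|z| = x\}$, parameterise $z = xe^{i\theta}$ with $\theta \in [-\pi,\pi]$, and use the saddle identity $w\sum_{j=1}^{\alpha(n)} x^j = n$ to cancel the terms linear in $\theta$. The integral becomes
\[
H_{n,\alpha}(w) = \frac{c_{n,\alpha}}{2\pi}\cdot\frac{\exp\bigl(w\sum_{j=1}^{\alpha(n)} x^j/j\bigr)}{x^n} \int_{-\pi}^{\pi} e^{g_w(\theta)}\,d\theta,
\]
where $g_w(\theta) := w\sum_{j=1}^{\alpha(n)} \frac{x^j}{j}(e^{ij\theta}-1-ij\theta)$ satisfies $g_w(0) = g_w'(0) = 0$ and $g_w''(0) = -\sigma_n^2$ with $\sigma_n^2 := w\sum_{j=1}^{\alpha(n)} j x^j$. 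Matching with the target formula reduces the proof to showing
\[
\int_{-\pi}^{\pi} e^{g_w(\theta)}\,d\theta = \sqrt{2\pi/\sigma_n^2}\bigl(1 + O(\alpha(n)/n)\bigr),
\]
uniformly in $w\in W$.

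I would then split the integral at a cutoff $\theta_0$ chosen so that the Gaussian $e^{-\sigma_n^2\theta^2/2}$ has effectively decayed at $|\theta| = \theta_0$ while the cubic Taylor remainder of $g_w$ remains controlled. The refined Taylor bound $|e^{ij\theta}-1-ij\theta + \tfrac12 j^2\theta^2|\leq \tfrac16 j^3|\theta|^3$ yields
\[
\bigl|g_w(\theta) + \tfrac12\sigma_n^2\theta^2\bigr| \leq \tfrac16 |\theta|^3\, w\sum_{j=1}^{\alpha(n)} j^2 x^j \leq \tfrac16 \alpha(n)\sigma_n^2 |\theta|^3,
\]
with analogous bounds for the fourth-order term and the imaginary part. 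Inside $|\theta|\leq\theta_0$, a standard Laplace-type argument (using that the leading odd-order terms integrate to zero against the Gaussian weight) gives $\int_{|\theta|\leq\theta_0} e^{g_w(\theta)}\,d\theta = \sqrt{2\pi/\sigma_n^2}(1 + O(\alpha(n)/n))$; the growth condition \eqref{growth condition} is precisely what ensures the existence of a cutoff $\theta_0$ in which both the Gaussian tail and the Taylor remainder are absorbed into the claimed error.

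The hard part is the tail estimate on $\theta_0 \leq |\theta| \leq \pi$, where I need a uniform lower bound of the form
\[
-\mathrm{Re}\,g_w(\theta) = w\sum_{j=1}^{\alpha(n)}\frac{x^j}{j}\bigl(1 - \cos(j\theta)\bigr) \geq \Delta_n,
\]
with $\Delta_n$ large enough that $e^{-\Delta_n}$ is exponentially smaller than $\sigma_n^{-1}\cdot\alpha(n)/n$. This is exactly the Diophantine-type estimate proved by Manstavi\v{c}ius and Petuchovas in \cite{manstavivcius2014local,manstavivcius2015permutations} for $w = 1$: one partitions the range $j\leq\alpha(n)$ according to the size of $1-\cos(j\theta)$ and exploits that the saddle radius $x$ stays only slightly above $1$ while $x^{\alpha(n)}$ is bounded below by a power of $n$. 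The ``very slight generalization'' alluded to in the text amounts to checking that the argument transports uniformly to $w \in W$. Applying the implicit function theorem to \eqref{eq:saddle} shows that $x_{n,\alpha}(w)$, $\sigma_n^2(w)$, and all auxiliary constants depend smoothly on $w$ with multiplicative $(1+O(\delta))$ corrections, which I expect is enough to preserve the Diophantine lower bound with $w$-uniform constants. Assembling the central and tail estimates then yields the proposition.
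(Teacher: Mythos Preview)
Your proposal is correct and follows essentially the same approach as the paper: a saddle-point analysis of the integral \eqref{eq:GenFunc}, with the tail estimate borrowed from Manstavi\v{c}ius--Petuchovas \cite{manstavivcius2014local} and carried over uniformly in $w$. The paper's own proof is in fact terser than yours---it simply invokes Theorem~2 of \cite{manstavivcius2014local} for $w=1$ and observes that the argument transfers ``almost verbatim'' once the strict inequality in \eqref{growth condition} is used to absorb the factor $w\in(1-\delta,1+\delta)$ into the constant, which is precisely the mechanism you describe.
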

\begin{proof}
First note that because of the strict inequality in \eqref{growth condition}, there exists
$\delta > 0$ such that for all $w \in (1-\delta,1+\delta)$ and all large enough $n$, we have 
\[
\alpha\left(n\right)\leq \frac{1}{12\pi^{2}\mathrm{e}} \frac{n}{w\log\left(n\right)\left(\log\left(\log\left(n\right)\right)\right)^{2}}.
\]
The proposition is then an adaptation of Theorem 2 in \cite{manstavivcius2014local}.
The latter makes use of the saddle point method and treats the case
$w=1$. The proof given by Manstavi\v{c}ius and Petuchovas extends
almost verbatim to the present situation 
when only large $n$ are considered. The main difference is a maximization
of the resulting constants in the error terms with respect to $w\in\left(1-\delta,1+\delta\right)$,
which does not affect their asymptotics.
\end{proof}

\begin{rem}
Indeed, \cite{manstavivcius2014local} proves the asymptotics in the case $w=1$ without restrictions on $\alpha$,
but the generalization to $w \neq 1$ is not as obvious without the growth restriction \eqref{growth condition}, so we stick with it. 
\end{rem}

To connect Propositions \ref{prop:Sach} and \ref{prop:asymp} and
prove the theorem, we thus need asymptotics of the derivatives of 
\begin{equation}
h_{n,\alpha}\left(w\right)=\log\left(c_{n,\alpha}\right)+w\sum_{j=1}^{\alpha\left(n\right)}\frac{x_{n,\alpha}^{j}\left(w\right)}{j}-n\log\left(x_{n,\alpha}\left(w\right)\right)-\frac{1}{2}\log\left(2\pi w\sum_{j=1}^{\alpha\left(n\right)}jx_{n,\alpha}^{j}\left(w\right)\right).\label{eq:kleinH}
\end{equation}

This is done in Lemmas \ref{lem:h1Ableitung} to \ref{lem:hStrichStrichStrichAs}, 
where it is proved that 
\[
h_{n,\alpha}^{\prime}\left(w\right)= \sum_{j=1}^{\alpha\left(n\right)}\frac{x_{n,\alpha}^{j}\left(w\right)}{j}+O\left(1\right), \qquad 
h_{n,\alpha}^{\prime\prime}\left(w\right)= \frac{nx_{n,\alpha}^{\prime}\left(w\right)}{wx_{n,\alpha}\left(w\right)}+O\left(1\right),
\]
and that $h_{n,\alpha}'''(w) = O(n/\alpha(n))$. Since it turns out 
(see Lemma \ref{h2strich asymptotics} and the discussion thereafter) that the 
leading terms of both $h_{n,\alpha}'$ and $h_{n,\alpha}''$ behave like $n/\alpha(n)$ 
for large $n$ but have different signs, a very fine asymptotic analysis is necessary 
to ensure that \eqref{sachkov condition} holds. This is done in Propositions 
\ref{prop:hStrich} and \ref{prop:hStrichStrich}. After this, Proposition \ref{prop:Sach} 
can be used to prove Theorem \ref{main thm}. 

The proof of Theorem \ref{quantitative} refines the results of Propositions
\ref{prop:hStrich} and \ref{prop:hStrichStrich} by adding information on the 
asymptotics of $x_{n,\alpha}(1)$. These are given in Lemma \ref{lem:xiEigensch}, 
and are essentially due to \cite{manstavivcius2014local}.

\section{Proofs: details}

The following lemma presents an expression for the first derivative
of $h_{n,\alpha}$.
\begin{lem}
\label{lem:h1Ableitung}Let $\alpha:\mathbb{N}\rightarrow\mathbb{N}$
and $0<\delta<1$. Then the following relation holds for all $n\in\mathbb{N}$
and $w\in\left(1-\delta,1+\delta\right)$:
\begin{equation}
h_{n,\alpha}^{\prime}\left(w\right)=\sum_{j=1}^{\alpha\left(n\right)}\frac{x_{n,\alpha}^{j}\left(w\right)}{j}-\frac{1}{2}\frac{x_{n,\alpha}^{\prime}\left(w\right)}{x_{n,\alpha}\left(w\right)}+\frac{1}{2}\frac{x_{n,\alpha}^{\prime\prime}\left(w\right)}{x_{n,\alpha}^{\prime}\left(w\right)}+\frac{1}{2w}.\label{eq:kleinHstrich}
\end{equation}
\end{lem}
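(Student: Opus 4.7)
The plan is to compute $h_{n,\alpha}'(w)$ by term-by-term differentiation of the definition \eqref{eq:kleinH}, reducing the intermediate sums to closed-form expressions using the saddle equation \eqref{eq:saddle} and its derivative. Smoothness of $x_{n,\alpha}(w)$ on $(1-\delta,1+\delta)$ (and hence of $h_{n,\alpha}$) follows from the implicit function theorem applied to $p_{n,\alpha,w}(x)=0$, since $x_{n,\alpha}(w)$ is a simple root of $p_{n,\alpha,w}$.

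Abbreviate $x=x_{n,\alpha}(w)$, $x'=x_{n,\alpha}'(w)$, $x''=x_{n,\alpha}''(w)$. The key preliminary step is to differentiate $\sum_{j=1}^{\alpha(n)} x^j = n/w$ once in $w$ to obtain
\[
x'\sum_{j=1}^{\alpha(n)} j x^{j-1} = -\frac{n}{w^{2}},
\]
and, after multiplication by $x$ and by $w$ and division by $x'$, the closed-form identity
\[
w\sum_{j=1}^{\alpha(n)} j x^{j} \;=\; -\frac{nx}{w\,x'}.
\]
This expresses the argument of the last logarithm in \eqref{eq:kleinH}, up to the factor $2\pi$, as a rational function of $x$, $x'$, $n$ and $w$ alone.

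From here the derivation is routine. Differentiating the second summand of \eqref{eq:kleinH}, namely $w\sum_{j} x^{j}/j$, produces $\sum_{j} x^{j}/j + wx'\sum_{j} x^{j-1}$; substituting $\sum_{j} x^{j-1} = n/(wx)$ from \eqref{eq:saddle} reduces the second term to $nx'/x$. The derivative of the third summand, $-n\log x$, equals $-nx'/x$, and cancels this contribution exactly. For the fourth summand, the closed form above and the general relation $(d/dw)\log|f|=f'/f$ yield
\[
-\tfrac{1}{2}\,\frac{d}{dw}\log\!\Bigl(2\pi w\sum_{j} jx^{j}\Bigr) \;=\; -\tfrac{1}{2}\frac{x'}{x} + \tfrac{1}{2w} + \tfrac{1}{2}\frac{x''}{x'}.
\]
Adding the four contributions produces exactly \eqref{eq:kleinHstrich}.

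No substantive obstacle arises here: the statement is an algebraic identity and requires no analytic estimates. The only point of care is tracking signs when differentiating logarithms, since $x'(w)<0$ on $(1-\delta,1+\delta)$; using the convention $(d/dw)\log|f|=f'/f$ sidesteps the need for explicit absolute values and makes the simplifications transparent.
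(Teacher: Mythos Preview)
Your proof is correct and follows essentially the same approach as the paper: both differentiate \eqref{eq:kleinH} term by term, use the saddle equation \eqref{eq:saddle} to cancel the extra $nx'/x$ contribution from the second and third terms, and rewrite $w\sum_j j x^j$ via the differentiated saddle identity (the paper's \eqref{eq:XstrichUndJ1}) before differentiating the last logarithm. Your version simply makes the intermediate algebra more explicit and notes the sign issue with $x'$, which the paper leaves implicit.
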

\begin{proof}
Since $x\mapsto\sum_{j=1}^{\alpha\left(n\right)}x^{j}$ is strictly
increasing on $\mathbb{R}_{+}$, the saddle point function $x_{n,\alpha}$
is infinitely often differentiable in $w$ by the inverse mapping
theorem and a bootstrapping argument. Due to Equation (\ref{eq:saddle}),
the derivative of the first three terms of Equation (\ref{eq:kleinH})
is the first term of Equation (\ref{eq:kleinHstrich}). For the last
term of Equation (\ref{eq:kleinH}), use the identity
\begin{equation}
\sum_{j=1}^{\alpha\left(n\right)}j\left(x_{n,\alpha}\left(w\right)\right)^{j}=-n\frac{x_{n,\alpha}\left(w\right)}{x_{n,\alpha}^{\prime}\left(w\right)}\frac{1}{w^{2}}\label{eq:XstrichUndJ1}
\end{equation}
obtained by differentiating Equation (\ref{eq:saddle}).
\end{proof}
The ingredients necessary for the proof of the theorem are asymptotics
for the derivatives of $h_{n,\alpha}$. A first step towards this
goal is
\begin{lem}
\label{lem:Divergenz}Let $\alpha:\mathbb{N}\rightarrow\mathbb{N}$
and $0<\delta<1$ such that $\frac{n}{w\alpha\left(n\right)}>1$ for
large $n$ and $w\in\left(1-\delta,1+\delta\right)$. Then
\[
\lim_{n\rightarrow\infty}\alpha\left(n\right)\log\left(x_{n,\alpha}\left(w\right)\right)=\infty
\]
if and only if
\[
\lim_{n\rightarrow\infty}\frac{\alpha\left(n\right)}{n}=0.
\]
\end{lem}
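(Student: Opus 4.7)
The plan is to sandwich $y := \alpha(n)\log x_{n,\alpha}(w)$ between two explicit functions of $n/(w\alpha(n))$ obtained from the saddle point identity $\sum_{j=1}^{\alpha(n)} x_{n,\alpha}^{j}(w) = n/w$, and then to observe that $\alpha(n)/n \to 0$ is equivalent to $n/(w\alpha(n)) \to \infty$ since $w$ stays in a bounded interval. Throughout I will write $x = x_{n,\alpha}(w)$ and $\alpha = \alpha(n)$; the assumption $n/(w\alpha) > 1$ ensures $x > 1$ and hence $y > 0$.

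For the direction ``$\alpha/n \to 0 \Rightarrow y \to \infty$'' I would use the crude upper bound that every term of the sum is at most $x^{\alpha}$, giving $n/w \leq \alpha x^{\alpha}$, i.e.\
\[
y \;\geq\; \log\!\Big(\frac{n}{w\alpha}\Big).
\]
Because $w \in (1-\delta, 1+\delta)$, the hypothesis $\alpha/n \to 0$ forces $n/(w\alpha) \to \infty$, and hence $y \to \infty$.

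For the converse I would argue the contrapositive. Substituting $x = e^{y/\alpha}$ into the geometric series formula yields
\[
\sum_{j=1}^{\alpha} x^{j} \;=\; \frac{e^{y/\alpha}(e^{y}-1)}{e^{y/\alpha}-1}.
\]
The key elementary inequality I need is $t e^{t} \geq e^{t} - 1$ for all $t \geq 0$, which follows from noting that $f(t) := e^{t}(t-1)+1$ satisfies $f(0)=0$ and $f'(t) = t e^{t} \geq 0$. Applied with $t = y/\alpha$, this inequality gives $e^{y/\alpha}/(e^{y/\alpha}-1) \geq \alpha/y$, and therefore
\[
\frac{n}{w} \;=\; \sum_{j=1}^{\alpha} x^{j} \;\geq\; \alpha\, \frac{e^{y}-1}{y}.
\]
Thus $n/(w\alpha) \geq (e^{y}-1)/y$; if $y \to \infty$ the right-hand side tends to infinity, so $n/(w\alpha) \to \infty$ and hence $\alpha/n \to 0$, as required.

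There is no serious obstacle; the whole argument reduces to one elementary inequality matched against one trivial one. The only subtle point worth flagging is the choice of lower bound on the geometric sum: the naive $\sum x^{j} \geq x^{\alpha}$ only gives $y \leq \log(n/w)$, which is too weak to exclude $y$ growing like $\log \alpha$ when $n/(w\alpha)$ stays bounded. The sharper estimate $\alpha(e^{y}-1)/y$ correctly captures the regime where $y$ is finite and is exactly what is needed to close the contrapositive.
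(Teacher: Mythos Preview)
Your proof is correct. The forward direction is identical to the paper's: both use the trivial bound $n/w \leq \alpha\, x^{\alpha}$ to conclude $y \geq \log\bigl(n/(w\alpha)\bigr)$. For the converse the paper takes a slightly different route: applying the AM--GM inequality to $x, x^{2}, \ldots, x^{\alpha}$ gives
\[
x^{(\alpha+1)/2} \;\leq\; \frac{1}{\alpha}\sum_{j=1}^{\alpha} x^{j} \;=\; \frac{n}{w\alpha},
\]
and hence the two-sided sandwich $\tfrac{1}{2}\,y \leq \log\bigl(n/(w\alpha)\bigr) \leq y$, from which the equivalence is immediate. Your inequality $t e^{t} \geq e^{t}-1$ yields the sharper bound $n/(w\alpha) \geq (e^{y}-1)/y$, which for large $y$ actually beats $e^{y/2}$ but does not directly produce the clean linear form $y \leq 2\log\bigl(n/(w\alpha)\bigr)$; the paper reuses precisely that linear form later (in the proof of the expectation asymptotics and of the main theorem). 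So your argument is fully sufficient for this lemma in isolation, while the paper's AM--GM delivers a slightly coarser but more portable estimate.

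A minor stylistic slip: you announce the contrapositive but in fact argue the converse directly (``if $y\to\infty$ then $\alpha/n\to 0$''); the logic is unaffected.
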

\begin{proof}
\cite{manstavivcius2014local} Comparing geometric and arithmetic
means yields
\begin{equation}
\left(x_{n,\alpha}\left(w\right)\right)^{\frac{\alpha\left(n\right)+1}{2}}=\left(\prod_{j=1}^{\alpha\left(n\right)}\left(x_{n,\alpha}\left(w\right)\right)^{j}\right)^{\frac{1}{\alpha\left(n\right)}}\leq\frac{1}{\alpha\left(n\right)}\sum_{j=1}^{\alpha\left(n\right)}\left(x_{n,\alpha}\left(w\right)\right)^{j}=\frac{n}{w\alpha\left(n\right)}.\label{eq:GeomArith}
\end{equation}
From Equation (\ref{eq:saddle}),
\[
\left(x_{n,\alpha}\left(w\right)\right)^{\alpha\left(n\right)}\geq\frac{n}{w\alpha\left(n\right)}.
\]
Hence,
\[
\left(x_{n,\alpha}\left(w\right)\right)^{\frac{\alpha\left(n\right)+1}{2}}\leq\frac{n}{w\alpha\left(n\right)}\leq\left(x_{n,\alpha}\left(w\right)\right)^{\alpha\left(n\right)}
\]
and, since the logarithm is increasing and $\log\left(x_{n,\alpha}\left(w\right)\right)\geq0$,
\begin{equation}
\frac{1}{2}\alpha\left(n\right)\log\left(x_{n,\alpha}\left(w\right)\right)\leq\log\left(\frac{n}{w\alpha\left(n\right)}\right)\leq\alpha\left(n\right)\log\left(x_{n,\alpha}\left(w\right)\right).\label{eq:alphaLogX}
\end{equation}
The claim is a direct consequence.\end{proof}
\begin{rem}
More detailed asymptotics for $x_{n,\alpha}\left(w\right)$ and related
quantities can be found in \cite{manstavivcius2014local}.\end{rem}
\begin{lem}
\label{lem:asympXstrich}Let $\alpha:\mathbb{N}\rightarrow\mathbb{N}$
and $0<\delta<1$ such that $\frac{n}{w\alpha\left(n\right)}>1$ for
large $n$ and $w\in\left(1-\delta,1+\delta\right)$. Moreover, let
\[
\lim_{n\rightarrow\infty}\frac{\alpha\left(n\right)}{n}=0.
\]
Then,
\[
\lim_{n\rightarrow\infty}\alpha\left(n\right)\frac{x_{n,\alpha}^{\prime}\left(w\right)}{x_{n,\alpha}\left(w\right)}=-\frac{1}{w}.
\]
\end{lem}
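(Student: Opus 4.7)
The plan is to isolate $\alpha(n) x'_{n,\alpha}(w)/x_{n,\alpha}(w)$ using identity \eqref{eq:XstrichUndJ1} and then reduce the problem to a sharp estimate of the weighted sum $\sum_{j=1}^{\alpha(n)} j x^j_{n,\alpha}(w)$. Writing $x = x_{n,\alpha}(w)$ and $M = \alpha(n)$ for brevity, \eqref{eq:XstrichUndJ1} rearranges immediately to
\[
\alpha(n)\,\frac{x_{n,\alpha}'(w)}{x_{n,\alpha}(w)} = -\frac{Mn}{w^{2}\sum_{j=1}^{M} j x^{j}},
\]
so the target limit $-1/w$ is equivalent to showing $w\sum_{j=1}^{M} j x^{j}/(Mn)\to 1$.

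The key step I would carry out is to obtain an exact closed form for $\sum_{j=1}^{M} j x^{j}$. Swapping the order of summation in $\sum_{j=1}^{M} j x^{j} = \sum_{k=1}^{M} \sum_{j=k}^{M} x^{j}$ and using the geometric series yields
\[
\sum_{j=1}^{M} j x^{j} = \frac{M x^{M+1} - \sum_{j=1}^{M} x^{j}}{x-1}.
\]
Substituting $\sum_{j=1}^{M} x^{j} = n/w$ from \eqref{eq:saddle}, and eliminating $x^{M+1}$ via the consequent identity $x^{M+1} = x + (x-1)n/w$, a short rearrangement produces the clean formula
\[
\frac{w\sum_{j=1}^{M} j x^{j}}{Mn} = 1 + \frac{1}{x^{M}-1} - \frac{1}{M(x-1)}.
\]

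Finally I would argue that both correction terms vanish. This is where Lemma~\ref{lem:Divergenz} does the work: the assumption $\alpha(n)/n\to 0$ gives $M\log x\to\infty$, so $x^{M}\to\infty$ and the first term is $o(1)$; and since $\log x \leq x-1$ for $x>1$, the same lemma forces $M(x-1)\to\infty$, killing the second term. The only real obstacle is spotting the right algebraic manipulation to produce an expression in which the error terms are directly controlled by Lemma~\ref{lem:Divergenz}; once the identity above is in hand the limit is immediate and no further analytic input is needed.
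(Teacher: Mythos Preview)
Your argument is correct and runs parallel to the paper's: both differentiate the saddle-point equation and finish with Lemma~\ref{lem:Divergenz} together with $x-1\geq\log x$. The only organizational difference is that the paper rewrites \eqref{eq:saddle} as $w(x^{\alpha(n)}-1)=n(1-x^{-1})$, differentiates to obtain the explicit formula \eqref{eq:XstrichfuerProp} for $x'_{n,\alpha}$, and then takes the reciprocal $x/(\alpha x')$, whereas you instead feed the existing identity \eqref{eq:XstrichUndJ1} a closed form for $\sum_{j\le M} j x^{j}$; the resulting correction terms $1/(x^{M}-1)$ and $1/(M(x-1))$ are exactly the ones the paper produces in its reciprocal expression, so the two computations are equivalent.
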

\begin{proof}
Starting from Equation (\ref{eq:saddle}), we have
\begin{equation}
w\left[\left(x_{n,\alpha}\left(w\right)\right)^{\alpha\left(n\right)}-1\right]=n\left(1-\frac{1}{x_{n,\alpha}\left(w\right)}\right).\label{eq:ZuDiff}
\end{equation}
Differentiating Equation (\ref{eq:ZuDiff}) with respect to $w$,
solving for $x_{n,\alpha}^{\prime}\left(w\right)$, and applying once
more Equation (\ref{eq:saddle}) yields
\begin{equation}
x_{n,\alpha}^{\prime}\left(w\right)=\frac{\left(x_{n,\alpha}\left(w\right)-1\right)x_{n,\alpha}\left(w\right)}{w\left(1-\alpha\left(n\right)\left(x_{n,\alpha}\left(w\right)-1\right)-\frac{w\alpha\left(n\right)x_{n,\alpha}\left(w\right)}{n}\right)}.\label{eq:XstrichfuerProp}
\end{equation}
Hence,
\begin{align*}
\lim_{n\rightarrow\infty}\frac{x_{n,\text{\ensuremath{\alpha}}}\left(w\right)}{\alpha\left(n\right)x_{n,\alpha}^{\prime}\left(w\right)}= & \lim_{n\rightarrow\infty}\left[\frac{w}{\alpha\left(n\right)\left(x_{n,\alpha}\left(w\right)-1\right)}-w-\frac{w^{2}x_{n,\alpha}\left(w\right)}{n\left(x_{n,\alpha}\left(w\right)-1\right)}\right]\\
= & -w
\end{align*}
by Lemma \ref{lem:Divergenz} and
\[
x_{n,\alpha}\left(w\right)-1\geq\log\left(x_{n,\alpha}\left(w\right)\right).
\]
\end{proof}

\begin{lem}
\label{lem:hAbleitungen}Let $\alpha:\mathbb{N}\rightarrow\mathbb{N}$
and $0<\delta<1$ as in Lemma \ref{lem:asympXstrich}. Then the following
relations hold uniformly in $w\in\left(1-\delta,1+\delta\right)$
as $n\rightarrow\infty$:
\begin{align}
h_{n,\alpha}^{\prime}\left(w\right)= & \sum_{j=1}^{\alpha\left(n\right)}\frac{x_{n,\alpha}^{j}\left(w\right)}{j}+O\left(1\right),\label{eq:11-1}\\
h_{n,\alpha}^{\prime\prime}\left(w\right)= & \frac{nx_{n,\alpha}^{\prime}\left(w\right)}{wx_{n,\alpha}\left(w\right)}+O\left(1\right),\label{eq:11-2}
\end{align}
and
\begin{align}
h_{n,\alpha}^{\prime\prime\prime}\left(w\right)= & \frac{nx_{n,\alpha}^{\prime\prime}\left(w\right)}{wx_{n,\alpha}\left(w\right)}-\frac{nx_{n,\alpha}^{\prime}\left(w\right)}{w^{2}x_{n,\alpha}\left(w\right)}-\frac{n}{w}\left(\frac{x_{n,\alpha}^{\prime}\left(w\right)}{x_{n,\alpha}\left(w\right)}\right)^{2}+O\left(1\right).\label{eq:11-3}
\end{align}
\end{lem}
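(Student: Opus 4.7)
My plan for Lemma \ref{lem:hAbleitungen} is to differentiate the closed-form expression \eqref{eq:kleinHstrich} from Lemma \ref{lem:h1Ableitung} repeatedly, to isolate the leading terms by means of the saddle relation \eqref{eq:saddle}, and to control all the remainders through uniform-in-$w$ estimates on the successive derivatives $x_{n,\alpha}^{(k)}(w)$ for $k=1,2,3,4$.

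For \eqref{eq:11-1}, the right-hand side of \eqref{eq:kleinHstrich} already exhibits $\sum_{j=1}^{\alpha(n)} x_{n,\alpha}^{j}(w)/j$ as the dominant contribution, so I only have to show that the remaining three summands collectively give $O(1)$. The term $1/(2w)$ is trivial, $x_{n,\alpha}^{\prime}/x_{n,\alpha}$ is $O(1/\alpha(n))$ by Lemma \ref{lem:asympXstrich}, and the term $x_{n,\alpha}^{\prime\prime}/x_{n,\alpha}^{\prime}$ is $O(1)$; the last bound I would obtain by differentiating the polynomial identity
\[
w\, x_{n,\alpha}^{\alpha(n)+1}(w) - (w+n)\, x_{n,\alpha}(w) + n = 0
\]
twice with respect to $w$ and solving algebraically for $x_{n,\alpha}^{\prime\prime}$.

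For \eqref{eq:11-2} and \eqref{eq:11-3}, I would differentiate \eqref{eq:kleinHstrich} one and two further times. The single identity doing most of the work is
\[
\frac{d}{dw}\sum_{j=1}^{\alpha(n)}\frac{x_{n,\alpha}^{j}(w)}{j} \;=\; \frac{x_{n,\alpha}^{\prime}(w)}{x_{n,\alpha}(w)}\sum_{j=1}^{\alpha(n)} x_{n,\alpha}^{j}(w) \;=\; \frac{n\, x_{n,\alpha}^{\prime}(w)}{w\, x_{n,\alpha}(w)},
\]
where the last equality uses \eqref{eq:saddle}. Applying this once produces the main term of \eqref{eq:11-2}; applying it a second time, combined with a product-rule expansion of $\frac{d}{dw}\bigl(n x_{n,\alpha}^{\prime}/(w x_{n,\alpha})\bigr)$, produces the three advertised main terms of \eqref{eq:11-3}. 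The successive derivatives of the correction $-\tfrac{1}{2} x_{n,\alpha}^{\prime}/x_{n,\alpha} + \tfrac{1}{2} x_{n,\alpha}^{\prime\prime}/x_{n,\alpha}^{\prime} + 1/(2w)$ expand into finite rational combinations of $x_{n,\alpha}$ and its first four derivatives, each of which I can check to be $O(1)$ using the bound on $x_{n,\alpha}^{\prime\prime}/x_{n,\alpha}^{\prime}$ from the previous step together with analogous bounds $x_{n,\alpha}^{\prime\prime\prime}, x_{n,\alpha}^{(4)} = O(1/\alpha(n))$.

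The principal obstacle is thus establishing those uniform-in-$w$ estimates on the higher derivatives of $x_{n,\alpha}$. Differentiating the polynomial identity $k$ times and solving algebraically for $x_{n,\alpha}^{(k)}$, one encounters at each step a denominator proportional to $w(\alpha(n)+1) x_{n,\alpha}^{\alpha(n)}(w) - (w+n)$. Rewriting $x_{n,\alpha}^{\alpha(n)}$ by means of the saddle relation yields
\[
w(\alpha(n)+1) x_{n,\alpha}^{\alpha(n)}(w) - (w+n) \;=\; n\alpha(n)(x_{n,\alpha}(w)-1)/x_{n,\alpha}(w) + O(n + \alpha(n)),
\]
and invoking Lemma \ref{lem:Divergenz} together with $x_{n,\alpha}(w)-1 \geq \log x_{n,\alpha}(w)$ shows that this denominator has uniform magnitude of order $n \log(n/\alpha(n))$, which is enough room to push the recursive estimates through.
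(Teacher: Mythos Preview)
Your plan is correct and follows essentially the same route as the paper: both arguments control the three correction terms in \eqref{eq:kleinHstrich} by proving $x_{n,\alpha}^{(k)}/x_{n,\alpha}=O(1/\alpha(n))$ for $k\le 4$ via repeated implicit differentiation of the saddle relation, then differentiate the main sum using \eqref{eq:saddle}. The only real difference is bookkeeping: instead of differentiating the polynomial identity and tracking the denominator $w(\alpha(n)+1)x_{n,\alpha}^{\alpha(n)}-(w+n)$, the paper differentiates the equivalent relation $\tfrac{x'}{x}\,J_1(x)=-n/w^{2}$ with $J_k(x)=\sum_{j\le\alpha(n)} j^{k}x^{j}$, and then uses the tidy bounds $J_k\le \alpha(n)^{k-1}J_1$ together with $J_1\sim n\alpha(n)/w$, which packages the recursion a bit more cleanly.

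One small caveat on your sketch: the claimed order $n\log(n/\alpha(n))$ for the denominator is correct only when $\alpha(n)\to\infty$. If $\alpha$ stays bounded then $x_{n,\alpha}(w)\to\infty$, so $\alpha(n)(x_{n,\alpha}-1)/x_{n,\alpha}\to\alpha(n)$ and the denominator is merely of exact order $n$; this is still ample for the recursion, but the argument should be phrased accordingly. For the same reason, your higher-derivative bounds should be stated as $x_{n,\alpha}^{(k)}/x_{n,\alpha}=O(1/\alpha(n))$ rather than $x_{n,\alpha}^{(k)}=O(1/\alpha(n))$, since $x_{n,\alpha}$ itself need not remain bounded under the hypotheses of Lemma~\ref{lem:asympXstrich}.
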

\begin{proof}
Let
\[
J_{k}\left(x\right):=\sum_{j=1}^{\alpha\left(n\right)}j^{k}x^{k},
\]
then
\[
\lim_{n\rightarrow\infty}\frac{J_{1}\left(x_{n,\alpha}\left(w\right)\right)}{n\alpha\left(n\right)}=\frac{1}{w}
\]
by Lemma \ref{lem:asympXstrich} and
\begin{equation}
\frac{x_{n,\alpha}^{\prime}\left(w\right)}{x_{n,\alpha}\left(w\right)}J_{1}\left(x_{n,\alpha}\left(w\right)\right)=-\frac{n}{w^{2}},\label{eq:xStrichplusJ}
\end{equation}
which is a consequence of Equation (\ref{eq:XstrichUndJ1}). Moreover,
\[
J_{k}\left(x_{n,\alpha}\left(w\right)\right)\leq\left(\alpha\left(n\right)\right)^{k-1}J_{1}\left(x_{n,\alpha}\left(w\right)\right)
\]
for all $k\geq1$. Differentiating Equation (\ref{eq:XstrichUndJ1})
yields
\begin{align*}
\frac{x_{n,\alpha}^{\prime\prime}\left(w\right)}{x_{n,\alpha}\left(w\right)}J_{1}\left(x_{n,\alpha}\left(w\right)\right)+\frac{n}{w^{2}}\frac{x_{n,\alpha}^{\prime}\left(w\right)}{x_{n,\alpha}\left(w\right)}+\left(\frac{x_{n,\alpha}^{\prime}\left(w\right)}{x_{n,\alpha}\left(w\right)}\right)^{2}J_{2}\left(x_{n,\alpha}\left(w\right)\right) & =\frac{2n}{w^{3}}.
\end{align*}
Hence,
\[
\frac{x_{n,\alpha}^{\prime\prime}\left(w\right)}{x_{n,\alpha}\left(w\right)}=O\left(\frac{1}{\alpha\left(n\right)}\right)
\]
uniformly in $w\in\left(1-\delta,1+\delta\right)$ by Lemma \ref{lem:asympXstrich}. Iterating this
procedure, we obtain
\[
\frac{x_{n,\alpha}^{\prime\prime\prime}\left(w\right)}{x_{n,\alpha}\left(w\right)}=O\left(\frac{1}{\alpha\left(n\right)}\right)
\]
and
\[
\frac{x_{n,\alpha}^{\left(4\right)}\left(w\right)}{x_{n,\alpha}\left(w\right)}=O\left(\frac{1}{\alpha\left(n\right)}\right).
\]
Equation (\ref{eq:11-1}) is now a direct consequence. Since it is
easily shown that the derivatives of
\[
\frac{x_{n,\alpha}^{\prime}\left(w\right)}{x_{n,\alpha}\left(w\right)}
\]
and
\[
\frac{x_{n,\alpha}^{\prime\prime}\left(w\right)}{x_{n,\alpha}^{\prime}\left(w\right)}
\]
are also $O\left(1\right)$, Equation (\ref{eq:11-2}) follows from
Equations (\ref{eq:11-1}) and (\ref{eq:saddle}). Further differentiation
yields Equation (\ref{eq:11-3}).\end{proof}
\begin{lem}
\label{lem:hStrichStrichStrichAs}Under the assumptions of Lemma \ref{lem:asympXstrich},
\[
h_{n,\alpha}^{\prime\prime\prime}\left(w\right)=O\left(\frac{n}{\alpha\left(n\right)}\right)
\]
uniformly in $w\in\left(1-\delta,1+\delta\right)$ as $n\rightarrow\infty$.\end{lem}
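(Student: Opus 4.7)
The plan is to derive the bound directly from the explicit formula for $h_{n,\alpha}'''(w)$ already established in Lemma \ref{lem:hAbleitungen}, combined with the estimates on the logarithmic derivatives of $x_{n,\alpha}$ produced in the course of that same proof. So essentially no new calculation is needed; the statement is just a bookkeeping consequence of what has already been done.

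Concretely, I would start from equation \eqref{eq:11-3}, which gives
\[
h_{n,\alpha}'''(w) = \frac{n x''_{n,\alpha}(w)}{w x_{n,\alpha}(w)} - \frac{n x'_{n,\alpha}(w)}{w^2 x_{n,\alpha}(w)} - \frac{n}{w}\left(\frac{x'_{n,\alpha}(w)}{x_{n,\alpha}(w)}\right)^2 + O(1),
\]
uniformly for $w \in (1-\delta,1+\delta)$. From Lemma \ref{lem:asympXstrich} we have $x'_{n,\alpha}(w)/x_{n,\alpha}(w) = O(1/\alpha(n))$, and the intermediate step in the proof of Lemma \ref{lem:hAbleitungen} (obtained by differentiating \eqref{eq:XstrichUndJ1} and using $J_k(x_{n,\alpha}(w)) \leq \alpha(n)^{k-1} J_1(x_{n,\alpha}(w))$ together with $J_1(x_{n,\alpha}(w)) \asymp n\alpha(n)$) yields $x''_{n,\alpha}(w)/x_{n,\alpha}(w) = O(1/\alpha(n))$, both uniformly in $w$.

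Plugging these in term by term, the first two summands contribute $O(n/\alpha(n))$ each, while the squared term contributes $O(n/\alpha(n)^2)$, which is absorbed into $O(n/\alpha(n))$ since $\alpha(n) \geq 4$ eventually. Finally, under the assumption $\alpha(n)/n \to 0$, the $O(1)$ remainder is also $O(n/\alpha(n))$. Collecting everything gives the claim.

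If there is any obstacle at all, it is merely verifying that all the $O$-estimates inherited from Lemmas \ref{lem:asympXstrich} and \ref{lem:hAbleitungen} are genuinely uniform in $w \in (1-\delta,1+\delta)$; but this uniformity is already built into those earlier statements, so the proof reduces to invoking \eqref{eq:11-3} and substituting. No finer cancellation is required here, in contrast to the much more delicate estimates that will be needed later for $h'_{n,\alpha}(1) + h''_{n,\alpha}(1)$ in Propositions \ref{prop:hStrich} and \ref{prop:hStrichStrich}.
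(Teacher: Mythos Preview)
Your proposal is correct and follows exactly the same route as the paper: invoke equation \eqref{eq:11-3} and bound each term using the estimates $x'_{n,\alpha}/x_{n,\alpha}=O(1/\alpha(n))$ and $x''_{n,\alpha}/x_{n,\alpha}=O(1/\alpha(n))$ already obtained. The paper's own proof says only that ``the dominating terms in Equation \eqref{eq:11-3} have the required property,'' so you have simply written out what the paper leaves implicit.
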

\begin{proof}
After the work done in the previous lemmata, this is an easy corollary.
The dominating terms in Equation (\ref{eq:11-3}) have the required
property.\end{proof}
\begin{lem} \label{h2strich asymptotics}
Under the assumptions of Lemma \ref{lem:asympXstrich},
\[
\lim_{n\rightarrow\infty}\frac{\alpha\left(n\right)}{n}h_{n,\alpha}^{\prime\prime}\left(1\right)=-1.
\]
\end{lem}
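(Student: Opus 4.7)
The proof is essentially immediate from combining the two previous lemmas, so my plan is to state how to do it cleanly rather than to hunt for new estimates.

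The plan is to start from Equation \eqref{eq:11-2} of Lemma \ref{lem:hAbleitungen}, which gives
\[
h_{n,\alpha}''(w) = \frac{n x_{n,\alpha}'(w)}{w x_{n,\alpha}(w)} + O(1),
\]
uniformly in $w \in (1-\delta,1+\delta)$. Specializing to $w=1$ and multiplying both sides by $\alpha(n)/n$ yields
\[
\frac{\alpha(n)}{n} h_{n,\alpha}''(1) = \alpha(n) \frac{x_{n,\alpha}'(1)}{x_{n,\alpha}(1)} + O\!\left(\frac{\alpha(n)}{n}\right).
\]

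Next I would invoke Lemma \ref{lem:asympXstrich} at $w=1$ to conclude
\[
\lim_{n\to\infty} \alpha(n)\,\frac{x_{n,\alpha}'(1)}{x_{n,\alpha}(1)} = -1.
\]
Finally, the assumption from Lemma \ref{lem:asympXstrich} that $\alpha(n)/n \to 0$ kills the error term, and the claim follows.

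There is really no main obstacle here: all the work has already been done. The only point worth double-checking is that the $O(1)$ error in \eqref{eq:11-2} is uniform in $w$ on $(1-\delta,1+\delta)$, so that evaluating at the single point $w=1$ is legitimate; this is explicitly stated in the conclusion of Lemma \ref{lem:hAbleitungen}. Hence the lemma reduces to a one-line calculation together with two invocations of prior results.
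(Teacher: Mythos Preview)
Your proof is correct and matches the paper's own argument exactly: the paper's proof is the single sentence ``The lemma follows from Equation \eqref{eq:11-2} and Lemma \ref{lem:asympXstrich},'' which is precisely what you spelled out.
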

\begin{proof}
The lemma follows from Equation (\ref{eq:11-2}) and Lemma \ref{lem:asympXstrich}.
\end{proof}

The asymptotic behaviour of $h_{n,\alpha}^{\prime}\left(1\right)$
is determined by
\begin{align*}
\sum_{j=1}^{\alpha\left(n\right)}\frac{x_{n,\alpha}^{j}\left(1\right)}{j}\geq & \frac{1}{\alpha\left(n\right)}\sum_{j=1}^{\alpha\left(n\right)}x_{n,\alpha}^{j}\left(1\right)\\
= & \frac{n}{\alpha\left(n\right)}
\end{align*}
which competes with $-\frac{n}{\alpha\left(n\right)}$ due to the
different sign. If the third derivative of $h_{n,\alpha}$ is to be
dominated by
\[
\left(h_{n,\alpha}^{\prime}\left(1\right)+h_{n,\alpha}^{\prime\prime}\left(1\right)\right)^{\frac{3}{2}},
\]
more careful estimates are needed. Such estimates will be provided
by Propositions \ref{prop:hStrich} and \ref{prop:hStrichStrich}.

The exponential integral (\cite{NIST:DLMF}) is given by 
\[
\mathrm{Ei}\left(x\right):=\mathrm{p.v.}\int_{-\infty}^{x}\frac{\exp\left(t\right)}{t}\mathrm{d}t
\]
and has the asymptotic expansion
\begin{equation}
\mathrm{Ei}\left(x\right)\sim\frac{\exp\left(x\right)}{x}\sum_{k=0}^{\infty}\frac{k!}{x^{k}}\label{eq:ExpIntAsymp}
\end{equation}
as $x\rightarrow\infty$.

\begin{prop}
\label{prop:hStrich}Let $\alpha:\mathbb{N}\rightarrow\mathbb{N}$
such that
\[
\lim_{n\rightarrow\infty}\frac{\alpha\left(n\right)}{n}=0
\]
and $\alpha\left(n\right)\geq2$ for large $n$. Then
\begin{align}
 & h_{n,\alpha}^{\prime}\left(1\right)\nonumber \\
= & \log\left(\alpha\left(n\right)\right)+\int_{0}^{\log\left(x_{n,\alpha}\left(1\right)\right)}\frac{\exp\left(\alpha\left(n\right)v\right)-1}{v}\frac{v\exp\left(v\right)\mathrm{d}v}{\exp\left(v\right)-1}+O\left(1\right)\nonumber \\
= & \log\left(\alpha\left(n\right)\right)+\frac{n}{\alpha\left(n\right)}+\frac{n}{\left(\alpha\left(n\right)\right)^{2}\left(x_{n,\alpha}\left(1\right)-1\right)}+2\frac{n}{\left(\alpha\left(n\right)\right)^{3}\left(x_{n,\alpha}\left(1\right)-1\right)\log\left(x_{n,\alpha}\left(1\right)\right)}\label{eq:Erwartungswert}\\
 & +p_{\alpha}\left(n\right)+O\left(\frac{n}{\left(\alpha\left(n\right)\right)^{4}\left(\log\left(x_{n,\alpha}\left(1\right)\right)\right)^{2}\left(x_{n,\alpha}\left(1\right)-1\right)}\right).\nonumber 
\end{align}
Here, $p_{\alpha}$ is a non-negative function satisfying $p_{\alpha}(n)=O\left( \frac{\left( x_{n,\alpha}(1) \right)^{\alpha(n)}}{(\alpha (n))^{3} \log (x_{n,\alpha}(1))} \right)$.\end{prop}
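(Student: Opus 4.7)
The plan is to build on Lemma \ref{lem:hAbleitungen}, which yields $h_{n,\alpha}^{\prime}(1) = \sum_{j=1}^{\alpha(n)} x^j/j + O(1)$, where I abbreviate $x := x_{n,\alpha}(1)$ and $\alpha := \alpha(n)$. For the first displayed equality I would first split off the harmonic part, writing $\sum_j x^j/j = H_\alpha + \sum_j (x^j-1)/j$ and using $H_\alpha = \log \alpha + O(1)$. Rewriting each summand as $(x^j - 1)/j = \int_0^{\log x} e^{jv}\,dv$ and exchanging sum and integral lets me sum the resulting geometric series to $\int_0^{\log x} e^v(e^{\alpha v}-1)/(e^v-1)\,dv$; the factorization $e^v(e^{\alpha v}-1)/(e^v-1) = \frac{e^{\alpha v}-1}{v} \cdot \frac{v e^v}{e^v - 1}$ then gives the first equality.

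For the second equality I would Taylor-expand the smooth factor $G(v) := v e^v/(e^v - 1)$ at $v = 0$. Its Bernoulli generating function (together with $B_3 = 0$) yields $G(v) = 1 + v/2 + v^2/12 + O(v^4)$ on bounded intervals. Substitution decomposes the integral as $I_1 + \tfrac{1}{2} J_2 + \tfrac{1}{12} J_3 + R$, with $I_1 := \int_0^{\log x} (e^{\alpha v} - 1)/v\,dv$, $J_k := \int_0^{\log x} v^{k-1}(e^{\alpha v} - 1)\,dv$ (elementary closed forms), and $R$ the tail contribution from $G(v) - 1 - v/2 - v^2/12$. The central step is the evaluation of $I_1$ via the exponential integral: splitting the range at $1/\alpha$ and substituting $t = \alpha v$ on the upper piece gives $I_1 = \mathrm{Ei}(\alpha \log x) - \log(\alpha \log x) + O(1)$, after which the asymptotic series \eqref{eq:ExpIntAsymp} provides three explicit terms with remainder $O(x^\alpha/(\alpha \log x)^4)$.

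To bring these expressions into the form claimed in the proposition I would appeal to the saddle-point identity \eqref{eq:saddle} in the form $x^\alpha = 1 + n(x-1)/x$, combined with Taylor expansions of $1/x$ and $(x-1)/\log x$ about $x = 1$ (both justified since $\log x \to 0$ by Lemma \ref{lem:Divergenz}). The crucial algebraic fact is that the Bernoulli coefficients $1,\ 1/2,\ 1/12$ in the expansion of $G$ are precisely those needed so that, when the leading Ei term $x^\alpha/(\alpha \log x)$ is rewritten as $n/\alpha$ plus Taylor corrections, the $(x-1)^2$ and $(x-1)^3$ contributions cancel exactly against the leading behaviour of $\tfrac{1}{2} J_2$ and $\tfrac{1}{12} J_3$; an explicit coefficient check confirms these cancellations. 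The next two Ei terms $x^\alpha/(\alpha \log x)^2$ and $2 x^\alpha/(\alpha \log x)^3$ then produce, after the same $(x-1)$-expansion trick, the two remaining explicit terms $n/(\alpha^2(x-1))$ and $2n/(\alpha^3(x-1)\log x)$, and the next Ei coefficient yields the claimed error $O(n/(\alpha^4 (\log x)^2 (x-1)))$. The non-negative function $p_\alpha(n)$ collects the surviving residual; it is most conveniently represented by a piece of the form $\int_0^{\log x} \frac{e^{\alpha v} - 1}{v}\,[G(v) - 1 - v/2]\,dv$ (together with the leftover higher-order Taylor residues), and non-negativity follows from $G(v) - 1 - v/2 = v^2/12 + O(v^4) \geq 0$ on $[0, \log x]$ for large $n$, combined with $(e^{\alpha v}-1)/v \geq 0$. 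The bound $p_\alpha(n) = O(x^\alpha/(\alpha^3 \log x))$ is then obtained by an estimate that exploits the very cancellations described above rather than the naive upper bound on $J_3$.

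The hardest part will be the bookkeeping of these cancellations: three distinct asymptotic expansions --- the Ei series in powers of $1/(\alpha \log x)$, the Taylor series of $\log x$ converting $1/\log x$ into $1/(x-1) + \cdots$, and the Bernoulli series of $G$ --- must each be carried to matched order and then aligned so that every surviving term ends up in one of the three explicit named positions, inside $p_\alpha$, or within the prescribed $O$-remainder, without any intermediate contribution escaping the hierarchy.
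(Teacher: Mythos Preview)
Your overall architecture --- reduce via Lemma~\ref{lem:hAbleitungen}, convert the sum to an integral against $g(v)=v\mathrm e^v/(\mathrm e^v-1)$, and extract the main terms through the $\mathrm{Ei}$ asymptotics --- matches the paper's. The decisive difference is the point at which you Taylor-expand $g$: you expand at $v=0$, whereas the paper expands at $v=\log x_{n,\alpha}(1)$. This is not a cosmetic choice; under the stated hypotheses your expansion does not go through.

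You justify the expansion at $0$ by saying ``$\log x\to 0$ by Lemma~\ref{lem:Divergenz}''. That lemma gives $\alpha(n)\log x_{n,\alpha}(1)\to\infty$, not $\log x_{n,\alpha}(1)\to 0$; the latter fails whenever $\alpha$ grows slowly. Take $\alpha\equiv 2$: then $x_{n,\alpha}(1)\sim\sqrt n$ and $\log x\to\infty$. In that regime $G(v)-1-v/2\sim v/2$ for $v$ near $\log x$, so your candidate
\[
p_\alpha(n)=\int_0^{\log x}\frac{\mathrm e^{\alpha v}-1}{v}\bigl[G(v)-1-\tfrac{v}{2}\bigr]\,\mathrm dv
\]
is of order $x^{\alpha}/\alpha$, not $O\!\bigl(x^{\alpha}/(\alpha^{3}\log x)\bigr)$ as required; no ``cancellations described above'' can repair this, because the integrand is non-negative and there is nothing left to cancel against. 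Likewise, all the $(x-1)$-Taylor steps you use to massage $x^{\alpha}/(\alpha\log x)$ into $n/\alpha$ plus lower-order pieces collapse once $x-1$ is not small.

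The paper's fix is exactly to expand $g$ about $v=\log x$: the remainder then carries a factor $(v-\log x)^{2}$, which vanishes where $\mathrm e^{\alpha v}$ is largest. Writing $(v-\log x)^{2}=v^{2}-2v\log x+(\log x)^{2}$ and integrating against $(\mathrm e^{\alpha v}-1)/v$ produces the combination $J_{3}-2(\log x)J_{2}+(\log x)^{2}I_{1}$, whose three leading $\tfrac{x^{\alpha}\log x}{\alpha}$ contributions cancel in the pattern $1-2+1=0$; the first surviving term is precisely $2x^{\alpha}/(\alpha^{3}\log x)$, giving the claimed bound on $p_\alpha$ uniformly for all $\alpha\ge 2$. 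Moreover, non-negativity of $p_\alpha$ follows directly from $g''\ge 0$, with no extra residues of uncertain sign to absorb. If you re-centre your expansion at $\log x$ instead of $0$, the rest of your plan (the $\mathrm{Ei}$ expansion and the saddle identity $x^{\alpha}=1+n(1-1/x)$) carries over essentially as written.
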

\begin{proof}
Lemma \ref{lem:hAbleitungen} shows that only one term is relevant
for the proof of the proposition.
\begin{align}
\sum_{j=1}^{\alpha\left(n\right)}\frac{x_{n,\alpha}^{j}\left(1\right)}{j}= & \sum_{j=1}^{\alpha\left(n\right)}\frac{1}{j}+\int_{1}^{x_{n,\alpha}\left(1\right)}\sum_{j=1}^{\alpha\left(n\right)}t^{j-1}\mathrm{d}t\nonumber \\
= & \sum_{j=1}^{\alpha\left(n\right)}\frac{1}{j}+\int_{0}^{\log\left(x_{n,\alpha}\left(1\right)\right)}\frac{\exp\left(\alpha\left(n\right)v\right)-1}{v}\frac{v\exp\left(v\right)\mathrm{d}v}{\exp\left(v\right)-1}.\label{eq:SumToInt}
\end{align}
The second line follows from
\begin{equation}
\sum_{j=1}^{\alpha\left(n\right)}t^{j-1}=\frac{t^{\alpha\left(n\right)}-1}{t-1}\label{eq:GeomReihe}
\end{equation}
and the substitution $v=\log\left(t\right)$ (\cite{manstavivcius2014local}).
It is a well-known fact about the harmonic numbers that
\[
\sum_{j=1}^{\alpha\left(n\right)}\frac{1}{j}=\log\left(\alpha\left(n\right)\right)+O\left(1\right).
\]
Let
\[
g\left(v\right):=\frac{v\exp\left(v\right)}{\exp\left(v\right)-1}.
\]
Then
\begin{align*}
g^{\prime}\left(v\right)= & \frac{\exp\left(v\right)\left[\exp\left(v\right)-1-v\right]}{\left(\exp\left(v\right)-1\right)^{2}}
\end{align*}
and
\[
g^{\prime\prime}\left(v\right)=\frac{\exp\left(v\right)\left[v\exp\left(v\right)-2\exp\left(v\right)+v+2\right]}{\left(\exp\left(v\right)-1\right)^{3}}.
\]
Expanding the exponential functions in the numerator of $g^{\prime\prime}$
shows that
\[
g^{\prime\prime}\left(v\right)\geq0
\]
for all $v\geq0$. Moreover, $g^{\prime\prime}$ is bounded on $\mathbb{R}_{+}$ since $\lim_{v \to 0}g^{\prime\prime}(v) = \frac{1}{6}$ and $\lim_{v \to \infty}g^{\prime\prime}(v) = 0$.

By expanding $g$ about $\log\left(x_{n,\alpha}\left(1\right)\right)$,
the integrand in Equation (\ref{eq:SumToInt}) becomes
\begin{align*}
 & \frac{\exp\left(\alpha\left(n\right)v\right)-1}{v}\frac{v\exp\left(v\right)}{\exp\left(v\right)-1}\\
= & \frac{\log\left(x_{n,\alpha}\left(1\right)\right)}{1-\left(x_{n,\alpha}\left(1\right)\right)^{-1}}\frac{\exp\left(\alpha\left(n\right)v\right)-1}{v}\\
 & +\frac{\left(x_{n,\alpha}\left(1\right)-1\right)-\log\left(x_{n,\alpha}\left(1\right)\right)}{\left(x_{n,\alpha}\left(1\right)-1\right)\left(1-\left(x_{n,\alpha}\left(1\right)\right)^{-1}\right)}\frac{\exp\left(\alpha\left(n\right)v\right)-1}{v}\left(v-\log\left(x_{n,\alpha}\left(1\right)\right)\right)\\
 & +\frac{g^{\prime\prime}\left(\Xi\left(v\right)\right)}{2}\frac{\exp\left(\alpha\left(n\right)v\right)-1}{v}\left(v-\log\left(x_{n,\alpha}\left(1\right)\right)\right)^{2}\\
= & \frac{1}{\left(1-\left(x_{n,\alpha}\left(1\right)\right)^{-1}\right)}\left[1-\frac{\log\left(x_{n,\alpha}\left(1\right)\right)}{x_{n,\alpha}\left(1\right)-1}\right]\left(\exp\left(\alpha\left(n\right)v\right)-1\right)\\
 & +\frac{\left(\log\left(x_{n,\alpha}\left(1\right)\right)\right)^{2}}{\left(x_{n,\alpha}\left(1\right)-1\right)\left(1-\left(x_{n,\alpha}\left(1\right)\right)^{-1}\right)}\frac{\exp\left(\alpha\left(n\right)v\right)-1}{v}\\
 & +\frac{g^{\prime\prime}\left(\Xi\left(v\right)\right)}{2}\frac{\exp\left(\alpha\left(n\right)v\right)-1}{v}\left(v-\log\left(x_{n,\alpha}\left(1\right)\right)\right)^{2}
\end{align*}
where $0\leq\Xi\left(v\right)\leq v$ by Taylor's theorem. An easy
calculation yields
\begin{align}
 & \int_{0}^{\log\left(x_{n,\alpha}\left(1\right)\right)}\left(\exp\left(\alpha\left(n\right)v\right)-1\right)\mathrm{d}v\nonumber \\
= & \frac{\left(x_{n,\alpha}\left(1\right)\right)^{\alpha\left(n\right)}}{\alpha\left(n\right)}-\log\left(x_{n,\alpha}\left(1\right)\right)-\frac{1}{\alpha\left(n\right)}\label{eq:Integral0}
\end{align}
and, by substituting $s=\frac{v}{\alpha\left(n\right)}$ and applying
Equation (\ref{eq:ExpIntAsymp}),
\begin{align}
 & \int_{0}^{\log\left(x_{n,\alpha}\left(1\right)\right)}\frac{\exp\left(\alpha\left(n\right)v\right)-1}{v}\mathrm{d}v\nonumber \\
= & \mathrm{Ei}\left(\alpha\left(n\right)\log\left(x_{n,\alpha}\left(1\right)\right)\right)-\log\log\left(\left(x_{n,\alpha}\left(1\right)\right)^{\alpha\left(n\right)}\right)+O\left(1\right)\nonumber \\
= & \frac{\left(x_{n,\alpha}\left(1\right)\right)^{\alpha\left(n\right)}}{\alpha\left(n\right)\log\left(x_{n,\alpha}\left(1\right)\right)}\left[1+\frac{1}{\alpha\left(n\right)\log\left(x_{n,\alpha}\left(1\right)\right)}+\frac{2}{\left(\alpha\left(n\right)\log\left(x_{n,\alpha}\left(1\right)\right)^{2}\right)}\right]\nonumber \\
 & +O\left(\frac{\left(x_{n,\alpha}\left(1\right)\right)^{\alpha\left(n\right)}}{\alpha\left(n\right)\log\left(x_{n,\alpha}\left(1\right)\right)}\frac{1}{\left(\alpha\left(n\right)\log\left(x_{n,\alpha}\left(1\right)\right)\right)^{3}}\right).\label{eq:ExpIntegral}
\end{align}
Due to Equations (\ref{eq:saddle}) and (\ref{eq:GeomReihe}),
\begin{align}
\left(x_{n,\alpha}\left(1\right)\right)^{\alpha\left(n\right)}= & 1+n\left(1-\left(x_{n,\alpha}\left(1\right)\right)^{-1}\right)\label{eq:SattelExakt}\\
\sim &  n\left(1-\left(x_{n,\alpha}\left(1\right)\right)^{-1}\right).\nonumber 
\end{align}
Applying Equations (\ref{eq:Integral0}), (\ref{eq:ExpIntegral}),
and (\ref{eq:SattelExakt}) to the integral in Equation (\ref{eq:SumToInt}),
we have
\begin{align*}
 & \int_{0}^{\log\left(x_{n,\alpha}\left(1\right)\right)}\frac{\exp\left(\alpha\left(n\right)v\right)-1}{v}\frac{v\exp\left(v\right)}{\exp\left(v\right)-1}\mathrm{d}v\\
= & \frac{n}{\alpha\left(n\right)}+\frac{n}{\left(\alpha\left(n\right)\right)^{2}\left(x_{n,\alpha}\left(1\right)-1\right)}+2\frac{n}{\left(\alpha\left(n\right)\right)^{3}\left(x_{n,\alpha}\left(1\right)-1\right)\log\left(x_{n,\alpha}\left(1\right)\right)}\\
 & +p_{\alpha}\left(n\right)+O\left(\frac{n}{\left(\alpha\left(n\right)\right)^{4}\left(\log\left(x_{n,\alpha}\left(1\right)\right)\right)^{2}\left(x_{n,\alpha}\left(1\right)-1\right)}\right)
\end{align*}
with
\[
p_{\alpha}\left(n\right):=\int_{0}^{\log\left(x_{n,\alpha}\left(1\right)\right)}\frac{g^{\prime\prime}\left(\Xi\left(v\right)\right)}{2}\frac{\exp\left(\alpha\left(n\right)v\right)-1}{v}\left(v-\log\left(x_{n,\alpha}\left(1\right)\right)\right)^{2}\mathrm{d}v \geq 0.
\]

Since $g^{\prime\prime}$ is bounded, a calculation similar to Equations (\ref{eq:Integral0}) and (\ref{eq:ExpIntegral}) yields
\[
p_{\alpha}\left( n \right) = O\left( \frac{\left( x_{n,\alpha}(1) \right)^{\alpha(n)}}{(\alpha (n))^{3} \log (x_{n,\alpha}(1))} \right).
\]

Finally, since by Equation (\ref{eq:alphaLogX})
\begin{align}
 & \alpha\left(n\right)\left(x_{n,\alpha}\left(1\right)-1\right)\nonumber \\
= & \alpha\left(n\right)\left[\exp\left(\log\left(x_{n,\alpha}\left(1\right)\right)\right)-1\right]\nonumber \\
\leq & \alpha\left(n\right)\left[\exp\left(2\frac{\log\left(\frac{n}{\alpha\left(n\right)}\right)}{\alpha\left(n\right)}\right)-1\right]\nonumber \\
= & O\left(\log\left(\frac{n}{\alpha\left(n\right)}\right)\right)\label{eq:alphaXgro}
\end{align}
for $\alpha\left(n\right)\geq\log\left(n\right)\geq\log\left(\frac{n}{\alpha\left(n\right)}\right)$
and
\begin{equation}
\alpha\left(n\right)\left(x_{n,\alpha}\left(1\right)-1\right)\leq\alpha\left(n\right)n^{\frac{1}{\alpha\left(n\right)}}\label{eq:alphaXklei}
\end{equation}
by Equation (\ref{eq:saddle}), we have
\[
1=O\left(\frac{n}{\left(\alpha\left(n\right)\right)^{4}\left(\log\left(x_{n,\alpha}\left(1\right)\right)\right)^{2}\left(x_{n,\alpha}\left(1\right)-1\right)}\right)
\]
and the claim is proved.\end{proof}
\begin{prop}
\label{prop:hStrichStrich}Let $\alpha:\mathbb{N}\rightarrow\mathbb{N}$
such that
\[
\lim_{n\rightarrow\infty}\frac{\alpha\left(n\right)}{n}=0.
\]
Then
\[
h_{n,\alpha}^{\prime\prime}\left(1\right)=-\frac{n}{\alpha\left(n\right)}\sum_{j=0}^{\infty}\left(\frac{1}{\alpha\left(n\right)\left(x_{n,\alpha}\left(1\right)-1\right)+\frac{\alpha\left(n\right)x_{n,\alpha}\left(1\right)}{n}}\right)^{j}+O\left(1\right)
\]
as $n\rightarrow\infty$.

If also $\alpha\left(n\right)\geq2$ for large $n$, then
\[
h_{n,\alpha}^{\prime\prime}\left(1\right)=-\frac{n}{\alpha\left(n\right)}\sum_{j=0}^{2}\left(\frac{1}{\alpha\left(n\right)\left(x_{n,\alpha}\left(1\right)-1\right)}\right)^{j}+o\left(\frac{n}{\left(\alpha\left(n\right)\right)^{3}\left(x_{n,\alpha}\left(1\right)-1\right)^{2}}\right)+O\left(1\right).
\]
\end{prop}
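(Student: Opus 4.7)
The proof combines equation (\ref{eq:11-2}) of Lemma \ref{lem:hAbleitungen} with the closed form (\ref{eq:XstrichfuerProp}) for $x_{n,\alpha}'$; evaluated at $w=1$ these yield
\[
h_{n,\alpha}''(1) = \frac{n(x_{n,\alpha}(1)-1)}{1 - A_n} + O(1), \qquad A_n := \alpha(n)(x_{n,\alpha}(1)-1) + \frac{\alpha(n) x_{n,\alpha}(1)}{n}.
\]
Lemma \ref{lem:Divergenz} together with the elementary inequality $x-1\geq\log x$ forces $A_n \to \infty$, so the geometric series $A_n/(A_n-1) = \sum_{j \geq 0} A_n^{-j}$ converges. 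To obtain the first displayed formula, I would rewrite $n(x_{n,\alpha}(1)-1) = (n/\alpha(n))A_n - x_{n,\alpha}(1)$, giving
\[
h_{n,\alpha}''(1) = -\frac{n}{\alpha(n)}\cdot\frac{A_n}{A_n-1} + \frac{x_{n,\alpha}(1)}{A_n - 1} + O(1),
\]
and then verify that the middle term is $O(1)$ by a brief case analysis: if $\alpha(n)\to \infty$ then (\ref{eq:GeomArith}) yields $x_{n,\alpha}(1)\to 1$, while if $\alpha(n)$ stays bounded the denominator still grows like $\alpha(n)x_{n,\alpha}(1)$, so $x_{n,\alpha}(1)/A_n = O(1/\alpha(n))$ in both cases. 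Expanding $A_n/(A_n-1) = \sum_{j \geq 0} A_n^{-j}$ then gives the first claim.

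For the second claim I would set $B_n := \alpha(n)(x_{n,\alpha}(1) - 1)$ and $C_n := \alpha(n) x_{n,\alpha}(1)/n$, so $A_n = B_n + C_n$, and observe $C_n/B_n \to 0$. The plan is to (i) truncate the geometric series at $j=2$, and (ii) replace $A_n$ by $B_n$ in the three remaining summands. For (i), the tail $A_n^{-2}(A_n-1)^{-1}$ is of order $B_n^{-3}$, so multiplied by $n/\alpha(n)$ it contributes $O\bigl(n/(\alpha(n)^4(x_{n,\alpha}(1)-1)^3)\bigr)$, which is $o\bigl(n/(\alpha(n)^3(x_{n,\alpha}(1)-1)^2)\bigr)$ because $B_n \to \infty$. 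For (ii), the algebraic identities $A_n^{-1} - B_n^{-1} = -C_n/(A_nB_n)$ and $A_n^{-2} - B_n^{-2} = -C_n(A_n + B_n)/(A_n^2 B_n^2)$, when multiplied by $n/\alpha(n)$ and combined with $C_n = \alpha(n) x_{n,\alpha}(1)/n$, reduce to expressions dominated by $x_{n,\alpha}(1)/B_n^2$. The bound (\ref{eq:GeomArith}) gives $x_{n,\alpha}(1)\alpha(n)/n \leq (n/\alpha(n))^{2/(\alpha(n)+1) - 1}$, and for $\alpha(n) \geq 2$ the exponent is at most $-1/3$, so this ratio tends to zero and the corrections fit inside the target $o$-term.

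The main obstacle is the delicate bookkeeping in the second step: both the geometric-series tail and the two $A_n \to B_n$ substitutions produce quantities of borderline order relative to the claimed remainder, so each estimate must be shown to be strictly smaller. This relies on combining the lower bound $\alpha(n)(x_{n,\alpha}(1)-1)\to\infty$ coming from Lemma \ref{lem:Divergenz} with the arithmetic-geometric upper bound (\ref{eq:GeomArith}) on $x_{n,\alpha}(1)$; neither bound alone suffices to squeeze the corrections into the prescribed $o$-term.
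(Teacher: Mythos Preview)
Your argument follows the paper's proof essentially step for step: the same starting point combining (\ref{eq:11-2}) with (\ref{eq:XstrichfuerProp}), the same split $n(x_{n,\alpha}(1)-1)=(n/\alpha(n))A_n-x_{n,\alpha}(1)$ leading to the geometric-series expansion, and the same use of (\ref{eq:GeomArith}) together with $\alpha(n)\geq 2$ to show the passage from $A_n$ to $B_n$ produces an $o\bigl(1/B_n^2\bigr)$ correction.

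There is one small slip in your case analysis for the $O(1)$ bound on $x_{n,\alpha}(1)/(A_n-1)$: the implication ``$\alpha(n)\to\infty$ forces $x_{n,\alpha}(1)\to 1$ via (\ref{eq:GeomArith})'' is false without further hypotheses. Inequality (\ref{eq:GeomArith}) only gives $x_{n,\alpha}(1)\leq (n/\alpha(n))^{2/(\alpha(n)+1)}$, and this upper bound need not tend to~$1$ when $\alpha(n)$ grows slowly (e.g.\ $\alpha(n)=\lfloor\log\log n\rfloor$). The repair is immediate and is what the paper's one-line ``follows easily from Lemma~\ref{lem:Divergenz}'' implicitly uses: since $\alpha(n)\geq 1$ one always has $A_n-1\geq \alpha(n)(x_{n,\alpha}(1)-1)-1\geq x_{n,\alpha}(1)-2$, so $x_{n,\alpha}(1)/(A_n-1)$ is bounded whenever $x_{n,\alpha}(1)\geq 3$, while for $x_{n,\alpha}(1)<3$ the bound is trivial because $A_n\to\infty$. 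With this correction your proof is complete and matches the paper's.
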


\begin{proof}
According to Lemma \ref{lem:hAbleitungen} and Equation (\ref{eq:XstrichfuerProp}),
\begin{align*}
h_{n,\alpha}^{\prime\prime}\left(1\right)= & \frac{nx_{n,\alpha}^{\prime}\left(1\right)}{x_{n,\alpha}\left(1\right)}+O\left(1\right)\\
= & \frac{n\left(x_{n,\alpha}\left(1\right)-1\right)}{1-\alpha\left(n\right)\left(x_{n,\alpha}\left(1\right)-1\right)-\frac{\alpha\left(n\right)x_{n,\alpha}\left(1\right)}{n}}+O\left(1\right)\\
= & \frac{n}{\alpha\left(n\right)}\frac{1}{\frac{1}{\alpha\left(n\right)\left(x_{n,\alpha}\left(1\right)-1\right)+\frac{\alpha\left(n\right)x_{n,\alpha}\left(1\right)}{n}}-1}\\
 & -\frac{x_{n,\alpha}\left(1\right)}{1-\left(\alpha\left(n\right)\left(x_{n,\alpha}\left(1\right)-1\right)+\frac{\alpha\left(n\right)x_{n,\alpha}\left(1\right)}{n}\right)}+O\left(1\right)\\
= & -\frac{n}{\alpha\left(n\right)}\sum_{j=0}^{\infty}\left(\frac{1}{\alpha\left(n\right)\left(x_{n,\alpha}\left(1\right)-1\right)+\frac{\alpha\left(n\right)x_{n,\alpha}\left(1\right)}{n}}\right)^{j}+O\left(1\right).
\end{align*}
The last line applies the geometric series and
\[
\frac{x_{n,\alpha}\left(1\right)}{1-\left(\alpha\left(n\right)\left(x_{n,\alpha}\left(1\right)-1\right)+\frac{\alpha\left(n\right)x_{n,\alpha}\left(1\right)}{n}\right)}=O\left(1\right),
\]
which follows easily from Lemma \ref{lem:Divergenz}.

From
\begin{align*}
 & \frac{1}{\alpha\left(n\right)\left(x_{n,\alpha}\left(1\right)-1\right)+\frac{\alpha\left(n\right)x_{n,\alpha}\left(1\right)}{n}}\\
= & \frac{1}{\alpha\left(n\right)\left(x_{n,\alpha}\left(1\right)-1\right)}-\frac{\alpha\left(n\right)}{n}\frac{x_{n,\alpha}\left(1\right)}{\left(\alpha\left(n\right)\right)^{2}\left(x_{n,\alpha}\left(1\right)-1\right)^{2}}
\end{align*}
and
\[
x_{n,\alpha}\left(1\right)\leq\left(\frac{n}{\alpha\left(n\right)}\right)^{\frac{2}{1+\alpha\left(n\right)}}
\]
by Equation (\ref{eq:GeomArith}), one may conclude that
\[
\frac{\alpha\left(n\right)}{n}\frac{x_{n,\alpha}\left(1\right)}{\left(\alpha\left(n\right)\right)^{2}\left(x_{n,\alpha}\left(1\right)-1\right)^{2}}=o\left(\left(\frac{1}{\alpha\left(n\right)\left(x_{n,\alpha}\left(1\right)-1\right)}\right)^{2}\right)
\]
if $\alpha\left(n\right)\geq2$. The second claim then follows.
\end{proof}
The tools needed to prove Theorem \ref{main thm} are now available.
\begin{proof}[Proof of Theorem \ref{main thm}]
Propositions \ref{prop:hStrich} and \ref{prop:hStrichStrich} yield
\begin{align}
 & h_{n,\alpha}^{\prime}\left(1\right)+h_{n,\alpha}^{\prime\prime}\left(1\right)\label{eq:Varianz}\\
= & \log\left(\alpha\left(n\right)\right)+\frac{n}{\alpha\left(n\right)}+\frac{n}{\left(\alpha\left(n\right)\right)^{2}\left(x_{n,\alpha}\left(1\right)-1\right)}+2\frac{n}{\left(\alpha\left(n\right)\right)^{3}\left(x_{n,\alpha}\left(1\right)-1\right)\log\left(x_{n,\alpha}\left(1\right)\right)}\nonumber \\
 & +p_{\alpha}\left(n\right)+O\left(\frac{n}{\left(\alpha\left(n\right)\right)^{4}\left(\log\left(x_{n,\alpha}\left(1\right)\right)\right)^{2}\left(x_{n,\alpha}\left(1\right)-1\right)}\right)\nonumber \\
 & -\frac{n}{\alpha\left(n\right)}\sum_{j=0}^{2}\left(\frac{1}{\alpha\left(n\right)\left(x_{n,\alpha}\left(1\right)-1\right)}\right)^{j}+o\left(\frac{n}{\left(\alpha\left(n\right)\right)^{3}\left(x_{n,\alpha}\left(1\right)-1\right)^{2}}\right).\nonumber 
\end{align}
Since
\[
x-1\geq\log\left(x\right)
\]
for $x>1$, we have
\[
\lim_{n\rightarrow\infty}\frac{h_{n,\alpha}^{\prime}\left(1\right)+h_{n,\alpha}^{\prime\prime}\left(1\right)}{\frac{n}{\left(\alpha\left(n\right)\right)^{3}\left(x_{n,\alpha}\left(1\right)-1\right)\log\left(x_{n,\alpha}\left(1\right)\right)}}\geq1.
\]
By Lemma \ref{lem:hStrichStrichStrichAs} and Equations (\ref{eq:alphaLogX}),
(\ref{eq:alphaXgro}), and (\ref{eq:alphaXklei}) as well as $\alpha\left(n\right)\geq4$,
\[
\lim_{n\rightarrow\infty}\frac{h_{n,\alpha}^{\prime\prime\prime}\left(w\right)}{\left(\frac{n}{\left(\alpha\left(n\right)\right)^{3}\left(x_{n,\alpha}\left(1\right)-1\right)\log\left(x_{n,\alpha}\left(1\right)\right)}\right)^{\frac{3}{2}}}=0
\]
uniformly in $w$.

Therefore, Proposition \ref{prop:Sach} may be applied and the theorem
is proved.\end{proof}
\begin{rem}
Equations (\ref{eq:Erwartungswert}) and (\ref{eq:Varianz}) show
that the behavior of $m_{n,\alpha}$ and $v_{n,\alpha}$ changes when $\log\left(\alpha\left(n\right)\right)$
surpasses $\frac{n}{\alpha\left(n\right)}$ and becomes the dominating
term. This blends in nicely with the fact that the classical uniform
model has asymptotic expectation and variance of $\log\left(n\right)$.\end{rem}
\begin{lem}
\label{lem:xiEigensch}\cite{manstavivcius2014local} Let $\alpha:\mathbb{N}\rightarrow\mathbb{N}$
such that $\log\left(n\right)\leq\alpha\left(n\right)<n$ for all
$n$. Then
\[
x_{n,\alpha}\left(1\right)=\exp\left(\frac{\xi\left(\frac{n}{\alpha\left(n\right)}\right)}{\alpha\left(n\right)}\right)+O\left(\frac{\log\left(\frac{n}{\alpha\left(n\right)}+1\right)}{\left(\alpha\left(n\right)\right)^{2}}\right),
\]
\[
\xi\left(\frac{n}{\alpha\left(n\right)}\right)=\log\left(\frac{n}{\alpha\left(n\right)}\right)+\log\left(\log\left(\frac{n}{\alpha\left(n\right)}+2\right)\right)+O\left(\frac{\log\left(\log\left(\frac{n}{\alpha\left(n\right)}+2\right)\right)}{\log\left(\frac{n}{\alpha\left(n\right)}+2\right)}\right)
\]
and
\[
\alpha\left(n\right)\log\left(x_{n,\alpha}\left(1\right)\right)=\xi\left(\frac{n}{\alpha\left(n\right)}\right)+O\left(\frac{\log\left(\frac{n}{\alpha\left(n\right)}+1\right)}{\alpha\left(n\right)}\right)
\]
as $n\rightarrow\infty$.\end{lem}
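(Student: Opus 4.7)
The plan is to derive all three asymptotics from a single perturbation analysis of the saddle point equation \eqref{eq:saddle} at $w=1$, reducing it to the defining equation \eqref{eq:DefXi} for $\xi$.

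First I would rewrite \eqref{eq:saddle} in a form comparable to \eqref{eq:DefXi}. Multiplying through by $x-1$ and using the geometric series gives $x_{n,\alpha}(1)(x_{n,\alpha}^{\alpha(n)}(1) - 1) = n(x_{n,\alpha}(1) - 1)$. Setting $y := \alpha(n)\log x_{n,\alpha}(1)$ so that $x_{n,\alpha}^{\alpha(n)}(1) = \mathrm{e}^y$, and abbreviating $u := n/\alpha(n)$, this becomes
\[
\mathrm{e}^y - 1 = n\bigl(1 - \mathrm{e}^{-y/\alpha(n)}\bigr) = n\Bigl(\tfrac{y}{\alpha(n)} - \tfrac{y^2}{2\alpha(n)^2} + O\bigl(\tfrac{y^3}{\alpha(n)^3}\bigr)\Bigr) = u y + R(y),
\]
where $R(y) = -\tfrac{uy^2}{2\alpha(n)} + O\bigl(\tfrac{uy^3}{\alpha(n)^2}\bigr)$. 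Under the hypothesis $\log n \leq \alpha(n) < n$, Lemma \ref{lem:Divergenz} together with \eqref{eq:alphaLogX} guarantees $y = O(\log(u+1))$ uniformly, so the remainder $R(y)$ is a genuine perturbation.

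Next I would pass from this perturbed equation to $\xi(u)$ by the implicit function theorem applied to $F(y,\varepsilon) := \mathrm{e}^y - 1 - uy - \varepsilon$ at $(\xi(u),0)$. Since $\partial_y F(\xi,0) = \mathrm{e}^\xi - u = 1 + u\xi - u = u(\xi - 1) + 1$, which is of order $u\xi \asymp u \log u$ for large $u$, one obtains $y = \xi(u) + O\bigl(R(y)/(u\xi(u))\bigr) = \xi(u) + O\bigl(y^2/(\xi(u)\alpha(n))\bigr) = \xi(u) + O\bigl(\log(u+1)/\alpha(n)\bigr)$. This is precisely the third claimed identity $\alpha(n)\log x_{n,\alpha}(1) = \xi(n/\alpha(n)) + O(\log(u+1)/\alpha(n))$, and exponentiating $y/\alpha(n)$ and using that $\mathrm{e}^{\xi/\alpha(n)}$ is bounded (again by \eqref{eq:alphaLogX}) gives the first identity $x_{n,\alpha}(1) = \mathrm{e}^{\xi/\alpha(n)} + O(\log(u+1)/\alpha(n)^2)$.

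Finally, for the asymptotic expansion of $\xi(u)$ itself, I would rewrite \eqref{eq:DefXi} as $\xi = \log(1 + u\xi)$ and iterate. A first pass yields $\xi = \log u + \log \xi + \log(1 + 1/(u\xi))$, and substituting the coarse bound $\xi = \Theta(\log u)$ from the remark after \eqref{eq:DefXi} gives the refined expansion $\xi(u) = \log u + \log\log(u+2) + O\bigl(\log\log(u+2)/\log(u+2)\bigr)$; the $+2$ inside the logarithms merely regularizes the expression for small $u$.

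The main obstacle is keeping the three error bounds consistent near the boundary $\alpha(n) \approx \log n$, where $u$ is largest and $\xi/\alpha(n)$ fails to be small; there one must be careful that the linearization of $\mathrm{e}^{-y/\alpha(n)}$ in the step that produces $R(y)$ is still justified and that the denominator $u(\xi-1)+1$ from the implicit function argument does not shrink, which is ensured by $\log n \leq \alpha(n)$ and the lower bound $\xi(u) > \log u$.
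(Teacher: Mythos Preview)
The paper does not actually prove this lemma: its entire argument is that the first two assertions are reformulations of Lemmata~5 and~1 in Manstavi\v{c}ius--Petuchovas, from which the third follows. Your sketch, by contrast, gives a self-contained derivation. The strategy --- rewrite the saddle point equation as a perturbation $e^y - 1 = uy + R(y)$ of the defining equation \eqref{eq:DefXi} for $\xi$, transfer the error via an implicit-function argument, and finally iterate $\xi = \log(1+u\xi)$ to expand $\xi(u)$ --- is correct and is essentially how the cited reference itself proceeds. So you are reproducing the external proof rather than the paper's, which simply outsources it.

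One step could use an extra line of justification. In passing from $F(y,\varepsilon)=0$ to $y-\xi(u)=O\bigl(R(y)/(u\xi)\bigr)$, the denominator $u\xi\asymp u\log u$ is $\partial_yF$ evaluated at $\xi$, but the mean value theorem actually produces $\partial_yF$ at some intermediate point $\eta\in[y,\xi]$, and since $\partial_yF$ vanishes at $\log u$ one must check that $\eta$ stays bounded away from $\log u$. This does hold: \eqref{eq:alphaLogX} gives $y\geq\log u$, and rewriting the equation as $e^y=1+u'y$ with $u'=u\cdot(1-e^{-y/\alpha(n)})/(y/\alpha(n))$ bounded below by a fixed multiple of $u$ (because $y/\alpha(n)\leq 2$) yields $e^y-u\gtrsim u\log u$, hence $F'(\eta)\geq F'(y)\gtrsim u\log u$. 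Equivalently, one can bypass the issue by reading off $y=\xi(u')$ directly and using $\xi'(v)=O(1/v)$ on $[u',u]$. Your final paragraph correctly flags the boundary $\alpha(n)\approx\log n$ as the delicate regime, and it is precisely there that this check matters.
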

\begin{proof}
The first and second parts are reformulations of assertions in Lemmata
5 and 1 in \cite{manstavivcius2014local} from which the third part
follows. The third equation is already stated in the proof of Corollary
1 in \cite{manstavivcius2014local}, albeit for a smaller range of
possible $\alpha$.\end{proof}
\begin{lem}
\label{lem:Tabschaetzung}\cite{manstavivcius2014local} Let 
\begin{equation}
T_{K}\left(z\right):=\int_{0}^{z}\frac{\exp\left(t\right)-1}{t}\left(\frac{t}{K}\frac{\exp\left(\frac{t}{K}\right)}{\exp\left(\frac{t}{K}\right)-1}-1\right)\mathrm{d}t\label{eq:defT}
\end{equation}
for $K>0.$ If $0\leq z\leq\pi K$, then
\[
\left|T_{K}\left(z\right)+\frac{z}{2K}\right|\leq\frac{4\exp\left(z\right)}{K}.
\]
\end{lem}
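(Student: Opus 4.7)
The plan is to peel off the Taylor leading behavior of the factor $g(u) - 1$ in the integrand so that $T_K(z)$ splits into an explicit part and a small remainder. Set $g(u) := \frac{u \exp(u)}{\exp(u) - 1}$ (with $g(0) := 1$), so that
\[
T_K(z) = \int_0^z \frac{\exp(t) - 1}{t}\bigl(g(t/K) - 1\bigr)\,\mathrm{d}t.
\]
A direct manipulation using $\frac{u}{\exp(u) - 1} = \frac{u}{2}\coth(u/2) - \frac{u}{2}$ gives the closed form $h(u) := g(u) - 1 - u/2 = \frac{u}{2}\coth(u/2) - 1$, and its Bernoulli expansion reads
\[
h(u) = \sum_{k \geq 1}\frac{B_{2k}\,u^{2k}}{(2k)!} = \frac{u^2}{12} - \frac{u^4}{720} + \frac{u^6}{30240} - \cdots.
\]
This is alternating with strictly decreasing absolute values on the entire interval $u \in [0, 2\pi)$, since consecutive ratios equal $u^2\,\zeta(2k+2)/\bigl((2\pi)^2\,\zeta(2k)\bigr) < 1$. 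Hence $0 \leq h(u) \leq u^2/12$ on $[0, \pi]$, which is precisely the range relevant under the hypothesis $z \leq \pi K$.

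The next step is the explicit split. The $u/2$ part of $g(u) - 1$ integrates elementarily:
\[
\int_0^z \frac{\exp(t) - 1}{t} \cdot \frac{t}{2K}\,\mathrm{d}t = \frac{\exp(z) - 1 - z}{2K},
\]
so, setting $R(z) := \int_0^z \frac{\exp(t) - 1}{t}\,h(t/K)\,\mathrm{d}t$, one obtains
\[
T_K(z) + \frac{z}{2K} = \frac{\exp(z) - 1}{2K} + R(z).
\]
The bound $|h(t/K)| \leq t^2/(12 K^2)$ combined with $\exp(t) - 1 \leq t \exp(t)$ for $t \geq 0$ yields
\[
|R(z)| \leq \frac{1}{12 K^2}\int_0^z t\bigl(\exp(t) - 1\bigr)\,\mathrm{d}t \leq \frac{z\exp(z)}{12 K^2} \leq \frac{\pi \exp(z)}{12 K},
\]
where the final inequality is the only place where $z \leq \pi K$ is used. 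Combining,
\[
\Bigl|T_K(z) + \frac{z}{2K}\Bigr| \leq \frac{\exp(z)}{2K} + \frac{\pi \exp(z)}{12 K} = \frac{\exp(z)}{K}\Bigl(\frac{1}{2} + \frac{\pi}{12}\Bigr) < \frac{4\exp(z)}{K}.
\]

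The main technical content lies in the pointwise estimate $|h(u)| \leq u^2/12$ on $[0, \pi]$. The constraint $u \leq \pi$ is what makes the elementary alternating-series argument go through: since consecutive absolute values in the Bernoulli expansion carry a ratio of order $u^2/(2\pi)^2$, pushing the inequality past $u = 2\pi$ is impossible, and even extending it into $[\pi, 2\pi)$ would demand a finer per-term analysis rather than a single alternating-series comparison. Once this inequality is in hand, the rest of the argument reduces to routine integration of $\exp(t)/t$-type expressions; the constant $4$ in the stated bound is rather generous compared to the $1/2 + \pi/12 \approx 0.76$ that actually emerges, leaving ample slack.
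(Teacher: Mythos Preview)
Your argument is correct and complete. The paper does not actually prove this lemma: its proof reads in full ``This is basically Lemma 6 in \cite{manstavivcius2014local},'' so there is nothing to compare against beyond the observation that you supply a self-contained elementary proof where the paper defers to an external reference. Your decomposition via $g(u)-1 = u/2 + h(u)$ with $h(u) = \tfrac{u}{2}\coth(u/2)-1$ and the alternating Bernoulli-series bound $0 \le h(u) \le u^2/12$ on $[0,2\pi)$ is clean and yields the sharper constant $\tfrac12 + \tfrac{\pi}{12}$ in place of $4$.

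One cosmetic remark: the phrase ``combined with $\exp(t)-1 \le t\exp(t)$'' is not what you actually use to pass from $\int_0^z t(\exp(t)-1)\,\mathrm{d}t$ to $z\exp(z)$; the straightforward bound $t(\exp(t)-1) \le t\exp(t)$ and $\int_0^z t\exp(t)\,\mathrm{d}t = (z-1)\exp(z)+1 \le z\exp(z)$, or simply $\int_0^z t(\exp(t)-1)\,\mathrm{d}t \le z\int_0^z(\exp(t)-1)\,\mathrm{d}t \le z\exp(z)$, does the job. This does not affect the validity of the proof.
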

\begin{proof}
This is basically Lemma 6 in \cite{manstavivcius2014local}.
\end{proof}
Theorem \ref{quantitative} can now be proved.
\begin{proof}[Proof of Theorem \ref{quantitative}]
We only need to verify the asymptotics of $h_{n,\alpha}^{\prime}\left(1\right)$
and $h_{n,\alpha}^{\prime}\left(1\right)+h_{n,\alpha}^{\prime\prime}\left(1\right)$.
Concerning $h_{n,\alpha}^{\prime}\left(1\right)$, by Proposition
\ref{prop:hStrich} and a substitution, the important term is
\begin{align*}
 & \int_{0}^{\alpha\left(n\right)\log\left(x_{n,\alpha}\left(1\right)\right)}\frac{\exp\left(v\right)-1}{v}\frac{v}{\alpha\left(n\right)}\frac{\exp\left(\frac{v}{\alpha\left(n\right)}\right)\mathrm{d}v}{\exp\left(\frac{v}{\alpha\left(n\right)}\right)-1}\\
= & T_{\alpha\left(n\right)}\left(\alpha\left(n\right)\log\left(x_{n,\alpha}\left(1\right)\right)\right)+I\left(\alpha\left(n\right)\log\left(x_{n,\alpha}\left(1\right)\right)\right)
\end{align*}
where
\[
I\left(z\right):=\int_{0}^{z}\frac{\exp\left(t\right)-1}{t}\mathrm{d}t.
\]
Since $0\leq\alpha\left(n\right)\log\left(x_{n,\alpha}\left(1\right)\right)\leq\pi\alpha\left(n\right)$
for $n$ large enough ($\log\left(x_{n,\alpha}\left(1\right)\right)\rightarrow0$
for $n\rightarrow\infty$ by Lemma \ref{lem:xiEigensch}), we can
apply Lemma \ref{lem:Tabschaetzung}. The resulting terms 
\[
\frac{\alpha\left(n\right)\log\left(x_{n,\alpha}\left(1\right)\right)}{2\alpha\left(n\right)}=\frac{\log\left(x_{n,\alpha}\left(1\right)\right)}{2}\longrightarrow0
\]
and
\begin{align*}
\frac{4\exp\left(\alpha\left(n\right)\log\left(x_{n,\alpha}\left(1\right)\right)\right)}{\alpha\left(n\right)}= & O\left(\frac{\exp\left(\xi\left(\frac{n}{\alpha\left(n\right)}\right)\right)}{\alpha\left(n\right)}\right)\\
= & O\left(\frac{n}{\left(\alpha\left(n\right)\right)^{2}}\log\left(\alpha\left(n\right)+2\right)\right)
\end{align*}
(by Lemma \ref{lem:xiEigensch}) do not contribute to the asymptotic
expansion.

The integrand
\[
\frac{\exp\left(v\right)-1}{v}
\]
in $I$ is strictly increasing in $v$. This fact, $\alpha\left(n\right)\log\left(x_{n,\alpha}\left(1\right)\right)=\xi\left(\frac{n}{\alpha\left(n\right)}\right)+O\left(\frac{\log\left(\frac{n}{\alpha\left(n\right)}+1\right)}{\alpha\left(n\right)}\right)$
(by Lemma \ref{lem:xiEigensch}), and
\[
I^{\prime}\left(\xi\left(u\right)\right)=u
\]
(by Equation (\ref{eq:DefXi})) lead to
\begin{align*}
I\left(\alpha\left(n\right)\log\left(x_{n,\alpha}\left(1\right)\right)\right)= & I\left(\xi\left(\frac{n}{\alpha\left(n\right)}\right)\right)+O\left(\frac{n}{\alpha\left(n\right)}\frac{\log\left(\frac{n}{\alpha\left(n\right)}+1\right)}{\alpha\left(n\right)}\right)\\
= & I\left(\xi\left(\frac{n}{\alpha\left(n\right)}\right)\right)+O\left(\frac{n}{\left(\alpha\left(n\right)\right)^{2}}\log\left(\frac{n}{\alpha\left(n\right)}+1\right)\right).
\end{align*}
where the error term is again of lower order. A calculation similar
to Equation (\ref{eq:ExpIntegral}) then leads to
\[
I\left(\xi\left(\frac{n}{\alpha\left(n\right)}\right)\right)\sim\frac{n}{\alpha\left(n\right)}\sum_{k=0}^{\infty}\frac{k!}{\left(\xi\left(\frac{n}{\alpha\left(n\right)}\right)\right)^{k}}
\]
so that
\[
h_{n,\alpha}^{\prime}\left(1\right)\sim\frac{n}{\alpha\left(n\right)}\sum_{k=0}^{\infty}\frac{k!}{\left(\xi\left(\frac{n}{\alpha\left(n\right)}\right)\right)^{k}}
\]
follows.

As to $h_{n,\alpha}^{\prime\prime}\left(1\right)$, we have
\begin{align}
\alpha\left(n\right)\left(x_{n,\alpha}\left(1\right)-1\right)= & \alpha\left(n\right)\log\left(x_{n,\alpha}\left(1\right)\right)+O\left(\alpha\left(n\right)\left(x_{n,\alpha}\left(1\right)-1\right)^{2}\right)\nonumber \\
= & \xi\left(\frac{n}{\alpha\left(n\right)}\right)+O\left(\frac{\left(\xi\left(\frac{n}{\alpha\left(n\right)}\right)\right)^{2}}{\alpha\left(n\right)}+\frac{\log\left(\frac{n}{\alpha\left(n\right)}+1\right)}{\alpha\left(n\right)}\right)\label{eq:LogXundXminus1}
\end{align}
by Taylor's theorem and Lemma \ref{lem:xiEigensch}. The order of
the error term is such that applying Equation (\ref{eq:LogXundXminus1})
to the result in Proposition \ref{prop:hStrichStrich} yields
\[
h_{n,\alpha}^{\prime\prime}\left(1\right)\sim -\frac{n}{\alpha\left(n\right)}\sum_{k=0}^{\infty}\frac{1}{\left(\xi\left(\frac{n}{\alpha\left(n\right)}\right)\right)^{k}}.
\]
The claim follows.
\end{proof}
\bibliographystyle{plain}

\end{document}